\newtheorem{theorem}{Theorem}[section]
\newtheorem{lemma}[theorem]{Lemma}
\newtheorem{proposition}[theorem]{Proposition}
\newtheorem{assumption}[theorem]{Assumption}
\DeclareMathOperator{\argmin}{argmin}
\newcommand{\R}{\mathbb{R}}
\newcommand{\inner}[2]{\langle{#1},{#2}\rangle}
\newcommand{\tos}{\rightrightarrows} 
\newcommand{\Y}{\mathcal{Y}}
\newcommand{\X}{\mathcal{X}}
\newcommand{\Z}{\mathcal{Z}}
\newcommand{\M}{\mathcal{M}}
\newcommand{\vgap}{\vspace{.1in}}
\newcommand{\tz}{\tilde z}
\newcommand{\bi}{\begin{itemize}}
\newcommand{\ei}{\end{itemize}}
\newcommand{\ba}{\begin{array}}
\newcommand{\ea}{\end{array}}
\begin{document}

\title{Pointwise and ergodic convergence rates of a variable metric proximal ADMM}

\author{
    M.L.N. Gon\c calves
    \thanks{Instituto de Matem\'atica e Estat\'istica, Universidade Federal de Goi\'as, Campus II- Caixa
    Postal 131, CEP 74001-970, Goi\^ania-GO, Brazil. (E-mails: {\tt
       maxlng@ufg.br} and {\tt jefferson@ufg.br}).  The work of these authors was
    supported in part by  CNPq Grants 406250/2013-8, 444134/2014-0 and 309370/2014-0.}
    \and
     M. Marques Alves
\thanks{
Departamento de Matem\'atica,
Universidade Federal de Santa Catarina,
Florian\'opolis, Brazil, 88040-900 
({\tt maicon.alves@ufsc.br}).
The work of this author was partially 
supported by CNPq grants no.
406250/2013-8, 306317/2014-1 and 405214/2016-2.
}
    \and
 J.G. Melo \footnotemark[1]
}
\date{May 4, 2017}

\maketitle

\begin{abstract}
In this paper, we obtain global  $\mathcal{O} (1/ \sqrt{k})$  pointwise  and $\mathcal{O} (1/ {k})$ ergodic  convergence rates for  a variable metric proximal  alternating direction method of multipliers
(VM-PADMM) for solving   linearly constrained convex optimization problems. The VM-PADMM can be seen as a class of ADMM variants, allowing the use of degenerate metrics (defined by noninvertible linear operators).
We first propose and study nonasymptotic convergence rates of  a  variable metric hybrid proximal extragradient  
(VM-HPE) framework for solving monotone inclusions. Then, the above-mentioned convergence rates for the VM-PADMM are obtained essentially by showing that it 
falls within the latter framework. 
 To the best of our knowledge, this is the first time that global pointwise (resp. pointwise and ergodic) convergence rates are obtained for
the VM-PADMM  (resp. VM-HPE framework).
\\
\\
  2000 Mathematics Subject Classification: 90C25, 90C60, 49M27, 47H05, 47J22,  65K10.
\\
\\   
Key words: alternating direction method of multipliers, variable metric, pointwise and ergodic convergence rates, hybrid proximal extragradient method, convex program.
 \end{abstract}

%
%
%
%
%
%
%
%
%
%
%
%
%
%
%
%
%
%
%
%
%
%
%
%
%

\pagestyle{plain}

\section{Introduction} \label{sec:int}
We consider the  linearly constrained convex optimization problem
\begin{align}
 \label{optl1}
 \begin{aligned}
  &\text{minimize }    \;\;\;f(x) + g(y)&\\
  &\text{subject to }  \;\;Ax + By = b,\;&
 \end{aligned}
\end{align}
where $f:\X\to \overline{\mathbb{R}}:=\mathbb{R}\cup\{+\infty\}$ and $g:\Y\to \overline{\mathbb{R}}$ are extended-real-valued 
proper closed and convex functions,  $\X,\Y$ and $\Gamma$ are finite-dimensional real vector spaces, and $A:\X\to \Gamma$ and $B:\Y\to \Gamma$ are linear operators. 
One of the most popular methods for solving \eqref{optl1} is the alternating direction method of multipliers (ADMM)  \cite{Boyd:2011,0352.65034,0368.65053}, for which many variants have been proposed and studied in the literature; see, e.g., 
\cite{attouch:hal,doi:10.1137/13090849X,ChPo_ref2,Damek,Deng1,PADMM_Eckstein,GADMM2015,FPST_editor,MJR,Hager,he.lia.der-new.mp02,HeLinear,Lin,LanADMM}. 

In this paper, we obtain  global ergodic and pointwise convergence rates  for
a variable metric proximal ADMM (VM-PADMM) which  can be described as follows: given an initial point $(x_0,y_0,\gamma_0)\in \X\times\Y\times\Gamma$ and a stepsize $\theta>0$, compute  a sequence $\{(x_k,y_k,\gamma_k)\}$, recursively, by
\begin{align}
 \label{def:xk-padmm}
   x_k&\in \argmin_{x \in \X} \left \{ f(x) - \inner{\gamma_{k-1}}{Ax}_\X +
   \frac{1}{2} \| Ax +By_{k-1} -b \|^2_{\Gamma,H_k}+\frac{1}{2}\|x- x_{k-1}\|_{\X,R_k}^2 \right\},\\
\label{def:yk-padmm}
 y_k&\in \argmin_{y \in \Y} \left \{ g(y) - \inner{\gamma_{k-1}}{By}_\Y +
 \frac{1}{2} \| Ax_k+By -b\|^2_{\Gamma,H_k} +\frac{1}{2}\|y- y_{k-1}\|_{\Y,S_k}^2\right\},\\
  \label{def:gammak}
  \gamma_k& = \gamma_{k-1}-\theta H_k\left(Ax_k+By_k-b\right),
\end{align}
 where $H_k$, $R_k$ and $S_k$  are selfadjoint  linear operators such that $H_k$ is positive definite and $R_k$ and $S_k$ are  positive semidefinite, and  $\|\cdot\|_{\Gamma,H_k}^2:=\inner{ H_k (\cdot)}{\cdot}_\Gamma$, etc. We start by reviewing some existing methods and works related to the above method.
 \\[2mm]
 {\bf VM-PADMM and some  variants.}  The VM-PADMM \eqref{def:xk-padmm}--\eqref{def:gammak} can be seen as a class of ADMM variants, depending on the choices of the linear operators $H_k$, $R_k$ and $S_k$. Namely,
 \begin{itemize}
 \item by taking $H_k=\beta I$ with $\beta>0$, $R_k=0$, $S_k=0$ and $\theta=1$, it reduces to the standard ADMM,  whose the ergodic convergence rate was established in \cite{monteiro2010iteration};
 \item the  ADMM in \cite{HeLinear} (related to the Uzawa method \cite{Xahang}) consists of taking $H_k=\beta I$ with $\beta>0$, $R_k$ constant, $S_k=0$ and $\theta=1$. Pointwise and ergodic convergence rates for this variant   were obtained in \cite{HeLinear,He2015};

 \item  the proximal ADMM consists of choosing $H_k=\beta I$ with $\beta>0$, $R_k$ and $S_k$ constant. This method has been studied by many authors; see, for instance   \cite{Cui,Deng1,MJR2,HpeADMM}, where convergence rates are analyzed; 
 
  \item by choosing $H_k=\beta_k I$, $R_k=0$ and $S_k=0$, it corresponds to a variable penalty parameter ADMM, for which asymptotic convergence analysis was considered in  \cite{He1998,He2000,solodov1};
  
  \item the VM-PADMM \eqref{def:xk-padmm}--\eqref{def:gammak}  with $R_k$ and $S_k$ positive definite  is closely related to the method  studied in \cite{he.lia.der-new.mp02,solodov3} for solving (point-to-point) continuous monotone variational inequality problems (in the setting of problem \eqref{optl1},  it demands $f$ and $g$ to be continuously differentiable). We mention that, contrary to our analysis, the latter references consider the stepsize $\theta=1$ and do not present nonasymptotic convergence rates;

   \item by letting $H_k=\beta I$,  $\beta>0$, and $\theta=1$,  the resulting method becomes similar to Algorithm 7 in  \cite{Bot2016}, where a composite structure of $f$ is considered and ergodic convergence rates were obtained under  the additional conditions that $B=I$ in (\ref{optl1}) and the dual solution set of \eqref{optl1} be bounded.
    \end{itemize} 
 \noindent
{\bf Contributions of the paper.}
We obtain an $\mathcal{O}(1/k)$ global convergence rate for an ergodic sequence associated to the VM-PADMM~\eqref{def:xk-padmm}--\eqref{def:gammak} with $\theta\in (0,(\sqrt{5}+1)/2)$, which provides, for given tolerances $\rho,\varepsilon>0$, triples $(x,y,\tilde \gamma)$,  $(r_x,r_y,r_\gamma)$ and scalars $\varepsilon_x,\varepsilon_y\geq 0$ such that
\begin{align}
 \label{eq:bound_intro}
 \begin{aligned}
 r_x\in & \;\partial_{\varepsilon_x} f(x)-A^*\tilde\gamma, \quad r_y\in \partial_{\varepsilon_y} g(y)-B^*\tilde \gamma, 
\quad r_\gamma=Ax+By-b,\\
 \max&\left\{\|r_x\|^*_x\,,\;\|r_y\|^*_y\,,\;\|r_\gamma\|^*_\gamma\right\}\leq \rho,\\
\varepsilon_x+&\varepsilon_y\leq \varepsilon,
\end{aligned}
\end{align}
in at most $\mathcal{O}\left(\max\left\{\left\lceil d_0/\rho\,\right\rceil,
\left\lceil d_0^2/\varepsilon\,\right\rceil\right\}\right)$ iterations, where $\|\cdot\|^*_x\,,\;\|\cdot\|^*_y\,$ and $\;\|\cdot\|^*_\gamma$ denote dual seminorms associated to the linear operators $H_k,R_k$ and $S_k$, and $d_0$ is a scalar measuring the quality of the initial point. Moreover, we establish an $\mathcal{O}(1/\sqrt{k})$ pointwise convergence rate in which the inclusions in \eqref{eq:bound_intro} are strengthened, in the sense that $\varepsilon_x=\varepsilon_y=0$,  and the bound on the number of iterations becomes  $\mathcal{O}\left(\left\lceil d_0^2/\rho^2\,\right\rceil\right)$.  
Our study is done by first establishing global ergodic and pointwise convergence rates for a variable metric hybrid proximal extragradient (VM-HPE) framework for finding zeroes of  maximal monotone operators, and 
then by showing that the VM-PADMM \eqref{def:xk-padmm}--\eqref{def:gammak} can be seen as an instance of the latter framework. 
To the best of our knowledge, this is the first time that global pointwise (resp. pointwise and ergodic) convergence rates are obtained for
the VM-PADMM \eqref{def:xk-padmm}--\eqref{def:gammak} (resp. VM-HPE framework).
 Besides, our analysis allows degenerate metrics (induced by positive semidefinite linear operators) which makes the VM-PADMM \eqref{def:xk-padmm}--\eqref{def:gammak} (and the VM-HPE framework) more suitable for applications.  We next briefly review   some related works to the VM-HPE framework.
 \\[2mm]
 {\bf VM-HPE type frameworks.} 
 The  VM-HPE framework proposed in this work is a generalization of a special instance of the HPE framework \cite{Sol-Sv:hy.ext} allowing variations in the metric (induced  by positive semidefinite linear operators) along the iterations.   
The iteration complexity of the HPE framework was first analyzed in \cite{monteiro2010complexity} and subsequently applied to the study of several methods; see, for example,  \cite{YHe2,Maicon,monteiro2011complexity,monteiro2010iteration}.
 An inexact variable metric proximal point type method  was proposed in \cite{par.lot.sol-cla.sjo08} but, contrary to our VM-HPE framework, it demands the metrics to be nondegenerate (induced by {invertible} linear operators).  Moreover, the convergence analysis presented in \cite{par.lot.sol-cla.sjo08} does not 
include nonasymptotic convergence rates. 
\\[2mm]
{\bf Outline of the paper.} Subsection~\ref{sec:bas} presents our notation and basic results.   Section~\ref{sec:smhpe} introduces the
    VM-HPE framework and presents  its nonasymptotic pointwise and ergodic convergence rates, whose proofs are postponed to Appendix \ref{sec:app}.
Section~\ref{sec:amal} contains two subsections. In Subsection~\ref{sec:vmadmm.g},  we formally state the VM-ADMM \eqref{def:xk-padmm}--\eqref{def:gammak} and presents  its nonasymptotic  pointwise and ergodic convergence rates. 
In Subsection~ \ref{subsec:proof}, we  obtain the convergence rates of the VM-ADMM by viewing it as an instance of the VM-HPE framework.

\subsection{Basic results and notation}
\label{sec:bas}

Let $\Z$ be a finite-dimensional real vector space with inner
product $\inner{\cdot}{\cdot}_{\Z}$ and induced
norm $\|\cdot\|_{\Z}:=\sqrt{\inner{\cdot}{\cdot}_{\Z}}$.
Denote by $\M^{\Z}_{+}$ (resp. $\M^{\Z}_{++}$) the space of
selfadjoint positive semidefinite (resp. definite) linear operators
on $\Z$. 
Each element $M\in \M^{\Z}_{+}$ induces a symmetric bilinear
form $\inner{M(\cdot)}{\cdot}_{\Z}$ on $\Z \times \Z$ and
a  seminorm $\|\cdot\|_{\Z,M}:=\sqrt{\inner{M(\cdot)}{\cdot}_{\Z}}$ on $\Z$.
Since $\inner{M(\cdot)}{\cdot}_{\Z}$ is symmetric and bilinear, 
the following hold, for all $z,z'\in \Z$,
\begin{align}
\label{eq:545}
 \inner{z}{Mz'}&\leq \dfrac{1}{2}\|z\|^2_{\Z,M}+\dfrac{1}{2}\|z'\|^2_{\Z,M},\\
\label{eq:500}
 \|z+z'\|_{\Z,M}^2&\leq 2\left(\|z\|_{\Z,M}^2+\|z'\|_{\Z,M}^2\right).
\end{align}
Moreover, each $M\in \M^{\Z}_{+}$ also induces
a (extended) dual seminorm on $\Z$ defined by
\[
 \|z\|_{\Z,M}^*:=\sup_{\|z'\|_{\Z,M}\leq 1}\,\inner{z}{z'}_{\Z}\qquad (z\in \Z).
\]
On the other hand, each $M\in \M^{\Z}_{++}$ induces an inner product
$\inner{M(\cdot)}{\cdot}_{\Z}$ and a norm 
$\|\cdot\|_{\Z,M}:=\sqrt{\inner{M(\cdot)}{\cdot}_{\Z}}$ on $\Z$, etc.

Next two propositions, whose proofs are omitted, will be useful
in this paper.

\begin{proposition}
\label{pr:dn}
 For every $M\in \M_+^{\Z}$, we have $\emph{dom}\,\|\cdot\|^*_{\Z,M}=\mathcal{R}(M)$ and $\|M(\cdot)\|^*_{\Z,M}=\|\cdot\|_{\Z,M}$, where $\mathcal{R}(M)$ denotes  the range of $M$.
\end{proposition}
\noindent
Let the partial order $\preceq$ on $\M^{\Z}_{+}$ be defined by
\[
  M\preceq N \;\iff\; N-M\in \M^{\Z}_{+}.
\]
\begin{proposition}
 \label{pr:dns}
Let $M,N \in \M^{\Z}_{+}$ and $c>0$. If $M\preceq cN$, then
\begin{align}
 \label{eq:dnorm}
 \|\cdot\|_{\Z,M}\leq\sqrt{c}\,\|\cdot\|_{\Z,N}\;\;\emph{and}\;\;\|\cdot\|_{\Z,N}^* \leq\sqrt{c}\,\|\cdot\|_{\Z,M}^*.
\end{align}
\end{proposition}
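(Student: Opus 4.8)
The plan is to prove the two inequalities in \eqref{eq:dnorm} directly from the definitions of the (primal) seminorm and dual seminorm induced by an element of $\M^\Z_+$, using the hypothesis $M\preceq cN$, i.e.\ $cN-M\in\M^\Z_+$.

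For the first inequality, I would fix $z\in\Z$ and simply write $\|z\|^2_{\Z,M}=\inner{Mz}{z}_\Z$. Since $cN-M\in\M^\Z_+$, the associated quadratic form is nonnegative, so $\inner{(cN-M)z}{z}_\Z\ge 0$, which rearranges to $\|z\|^2_{\Z,M}\le c\,\inner{Nz}{z}_\Z=c\,\|z\|^2_{\Z,N}$. Taking square roots yields $\|z\|_{\Z,M}\le\sqrt{c}\,\|z\|_{\Z,N}$ for all $z$, which is the first claim.

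For the second (dual) inequality, I would use the definition $\|z\|^*_{\Z,N}=\sup\{\inner{z}{z'}_\Z:\ \|z'\|_{\Z,N}\le 1\}$. Given $z'$ with $\|z'\|_{\Z,N}\le 1$, the first inequality (applied to $z'$) gives $\|z'\|_{\Z,M}\le\sqrt{c}\,\|z'\|_{\Z,N}\le\sqrt{c}$, hence $\|\tfrac{1}{\sqrt{c}}z'\|_{\Z,M}\le 1$. Therefore $\inner{z}{z'}_\Z=\sqrt{c}\,\inner{z}{\tfrac{1}{\sqrt{c}}z'}_\Z\le\sqrt{c}\,\|z\|^*_{\Z,M}$, and taking the supremum over all admissible $z'$ gives $\|z\|^*_{\Z,N}\le\sqrt{c}\,\|z\|^*_{\Z,M}$. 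One mild subtlety worth a remark is that these are \emph{extended} seminorms (possibly $+\infty$): the inequality still holds trivially when the right-hand side is $+\infty$, and when it is finite the argument above is valid verbatim. By Proposition~\ref{pr:dn}, finiteness of $\|z\|^*_{\Z,M}$ corresponds to $z\in\mathcal{R}(M)$, and since $M\preceq cN$ forces $\mathcal{R}(M)\subseteq\mathcal{R}(N)$, the domains are consistent with the stated inequality.

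I do not anticipate any real obstacle here; the only point requiring a little care is bookkeeping with the possibly infinite values of the dual seminorm and the rescaling constant $1/\sqrt{c}$, but both are routine. The whole argument is two short chains of inequalities, which is presumably why the paper omits it.
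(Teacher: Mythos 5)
Your proof is correct; the paper explicitly omits the proof of this proposition, and your argument (quadratic-form comparison for the primal seminorm, then rescaling test vectors in the supremum defining the dual seminorm) is exactly the routine verification the omission presupposes. The side remarks about extended values and $\mathcal{R}(M)\subseteq\mathcal{R}(N)$ are also accurate.
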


A set-valued mapping $T:\Z\tos \Z$
 %
%
 is said to be \emph{monotone} if 
\[
 \inner{v-v'}{z-z'}\geq 0\quad \forall \; z,z'\in \Z, \forall \;v\in T(z), \forall \; v'\in T(z').
\]
Moreover, $T$ is \emph{maximal monotone} if it is monotone and, additionally, if $S$ is a monotone operator such that $T(z)\subset S(z)$ for every $z\in \Z$  then $T=S$.
 The \emph{inverse} operator $T^{-1}:\Z\tos \Z$ of $T$ is given by
 $T^{-1}(v):=\{z\in \Z\;|\; v\in T(z)\}$.
Given  $\varepsilon\geq0$, the 
 {$\varepsilon$-enlargement} $T^{\varepsilon}:\Z\tos \Z$
 of a set-valued mapping $T:\Z\tos \Z$ is defined as
\[
 T^{\varepsilon}(z)
 :=\{v\in \Z\;|\;\inner{v-v'}{z-z'}\geq -\varepsilon,\;\;\forall z' \in \Z, \forall\; v'\in T(z')\} \quad \forall z \in \Z.
\]

Recall that the 
{$\varepsilon$-subdifferential} of a 
 convex function $f:\Z\to  \overline{\mathbb{R}}$
is defined by
$\partial_{\varepsilon}f(z):=\{v\in \Z\,|\,f(z')\geq f(z)+\inner{v}{z'-z}-\varepsilon\;\;\forall z'\in \Z\}$ 
for every $z\in \Z$.
When $\varepsilon=0$, then $\partial_0 f(z)$ 
is denoted by $\partial f(z)$
and is called the \emph{subdifferential} of $f$ at $z$.
The operator $\partial f$ is trivially monotone if $f$ is proper.
If $f$ is a proper closed and convex function, then
$\partial f$ is also maximal monotone~\cite{Rockafellar}.

The following result is a particular case of the {\it weak transportation formula} in \cite[Theorem~2.3]{Bu-Sag-Sv:teps1} combined with \cite[Proposition~2(i)]{Bu-Iu-Sv:teps}.
\begin{theorem}
 \label{th:tf}
  Suppose $T:\Z\tos \Z$ is maximal monotone and
	let $\tilde z_i, r_i\in \Z$,
	for $i=1,\dots, k$, be such that
$
	 r_i\in T(\tilde z_i)
	$
	and define
	\[
	 \tilde z_k^a:=\frac1k\sum_{i=1}^k\, \tilde z_i\,,\quad r_k^a:=\frac1k\sum_{i=1}^k\; r_i\,,\quad
	 \varepsilon_k^a:=\frac1k\sum_{i=1}^k\inner{r_i}{\tilde z_i-\tilde z_k^a}.
	\]
	 %
	Then, the following hold:
	 \begin{itemize}
	 \item[\emph{(a)}] $\varepsilon_k^a\geq 0$ and $r_k^a\in T^{\varepsilon_k^a}(\tilde z_k^a)$;
	  \item[\emph{(b)}] if, in addition, $T=\partial f$ for some proper closed and convex function
		$f$,
		then $r_k^a\in \partial_{\varepsilon_k^a} f(\tilde z_k^a)$. 
	\end{itemize}
	\end{theorem}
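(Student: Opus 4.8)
The plan is to prove both parts directly from the definitions, using only monotonicity of $T$ in part (a) (maximality is not needed) and the subgradient inequality together with convexity of $f$ in part (b); in this finite form everything is elementary, so I would not invoke the general transportation machinery of \cite{Bu-Sag-Sv:teps1,Bu-Iu-Sv:teps}.

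First I would record the bookkeeping identities
\[
 k\,\varepsilon_k^a=\sum_{i=1}^{k}\inner{r_i}{\tilde z_i}-k\inner{r_k^a}{\tilde z_k^a}=\frac{1}{2k}\sum_{i=1}^{k}\sum_{j=1}^{k}\inner{r_i-r_j}{\tilde z_i-\tilde z_j},
\]
both obtained by expanding the sums and using $\sum_i\tilde z_i=k\,\tilde z_k^a$ and $\sum_i r_i=k\,r_k^a$. Since $r_i\in T(\tilde z_i)$ for every $i$, monotonicity of $T$ gives $\inner{r_i-r_j}{\tilde z_i-\tilde z_j}\geq 0$ for each pair $(i,j)$, so the last expression is nonnegative; this proves $\varepsilon_k^a\geq 0$.

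For the enlargement claim in (a) I would fix an arbitrary pair $(z',v')$ with $v'\in T(z')$ and write, for each $i$,
\[
 \inner{r_i-v'}{\tilde z_i-z'}=\inner{r_i-v'}{\tilde z_i-\tilde z_k^a}+\inner{r_i-v'}{\tilde z_k^a-z'},
\]
summing over $i=1,\dots,k$. Because $\sum_i(\tilde z_i-\tilde z_k^a)=0$, the first group of terms reduces to $k\,\varepsilon_k^a$ and the second to $k\inner{r_k^a-v'}{\tilde z_k^a-z'}$, while the left-hand side is $\geq 0$ by monotonicity of $T$. Dividing by $k$ yields $\inner{r_k^a-v'}{\tilde z_k^a-z'}\geq-\varepsilon_k^a$, and since $(z',v')$ was arbitrary this is exactly $r_k^a\in T^{\varepsilon_k^a}(\tilde z_k^a)$.

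Finally, for (b) with $T=\partial f$, I would fix $z\in\Z$ and use $r_i\in\partial f(\tilde z_i)$, i.e. $f(\tilde z_i)\leq f(z)-\inner{r_i}{z-\tilde z_i}$, for each $i$; averaging over $i$, bounding $f(\tilde z_k^a)\leq\frac1k\sum_i f(\tilde z_i)$ by convexity of $f$, and substituting the identity $\frac1k\sum_i\inner{r_i}{\tilde z_i}=\varepsilon_k^a+\inner{r_k^a}{\tilde z_k^a}$, one arrives at $f(z)\geq f(\tilde z_k^a)+\inner{r_k^a}{z-\tilde z_k^a}-\varepsilon_k^a$ for every $z$, which is precisely $r_k^a\in\partial_{\varepsilon_k^a}f(\tilde z_k^a)$. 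No step is genuinely hard; the only points to watch are the cancellations coming from $\sum_i(\tilde z_i-\tilde z_k^a)=0$, and the observation that (b) does not follow formally from (a) — the enlargement $(\partial f)^{\varepsilon}$ being in general strictly larger than $\partial_\varepsilon f$ — so it must be derived on its own from the subgradient inequality as above.
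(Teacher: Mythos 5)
Your proof is correct. Note, however, that the paper does not prove this statement at all: it is quoted as a particular case of the weak transportation formula of Burachik--Sagastiz\'abal--Svaiter combined with a result of Burachik--Iusem--Svaiter, so the comparison here is between your self-contained argument and an appeal to the literature. Your computations check out: the double-sum identity $k\varepsilon_k^a=\tfrac{1}{2k}\sum_{i,j}\inner{r_i-r_j}{\tilde z_i-\tilde z_j}$ expands correctly and gives nonnegativity from pairwise monotonicity; the decomposition $\inner{r_i-v'}{\tilde z_i-z'}=\inner{r_i-v'}{\tilde z_i-\tilde z_k^a}+\inner{r_i-v'}{\tilde z_k^a-z'}$ together with $\sum_i(\tilde z_i-\tilde z_k^a)=0$ yields exactly the defining inequality of $T^{\varepsilon_k^a}$; and in (b) the averaging of the subgradient inequalities plus convexity of $f$ (with $f(\tilde z_i)$ finite since $\partial f(\tilde z_i)\neq\emptyset$) gives the $\varepsilon$-subgradient inequality. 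Your closing observation that (b) cannot be deduced formally from (a), because $(\partial f)^{\varepsilon}$ is in general strictly larger than $\partial_{\varepsilon}f$, is exactly the reason the paper needs the second cited reference in addition to the transportation formula; making that point explicit is a genuine improvement in transparency over the paper's citation-only treatment, at the cost of a page of elementary bookkeeping that the references package once and for all.
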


\section{A variable metric HPE framework}
\label{sec:smhpe}
Consider the monotone inclusion problem 
%
\begin{align}
\label{eq:inc.p}
 0\in T(z),
\end{align}
where $\Z$ is a finite-dimensional inner product 
real vector space and $T:\Z\tos \Z$ is maximal monotone. 
Assume that the solution set $T^{-1}(0)$ of~\eqref{eq:inc.p}  
is nonempty. 
%

In this section, we propose a variable metric hybrid proximal extragradient
(VM-HPE) framework for solving 
\eqref{eq:inc.p} and 
analyze  its nonasymptotic  convergence rates. %
The proposed framework finds its roots
in the hybrid proximal extragradient (HPE) framework of \cite{Sol-Sv:hy.ext}, for which
the iteration complexity was recently obtained  in~\cite{monteiro2010complexity}.
%
Our main results
on pointwise and ergodic  convergence rates for the VM-HPE framework  are presented in Theorems
\ref{th:alpha} and \ref{th:vhpe.eg}, respectively.
In Section \ref{sec:amal}, we will show how the VM-HPE framework can be used to analyze the nonasymptotic 
 convergence  of a VM-PADMM  for solving linearly constrained convex optimization problems. 

\vgap
We  begin by stating the VM-HPE framework.
\vgap
\vgap

\noindent
\fbox{
\begin{minipage}[h]{6.4 in}
{\bf A  variable metric hybrid proximal extragradient (VM-HPE) framework}
\begin{itemize}
\item[(0)] Let $z_0 \in \Z$, $\eta_0 \in \R_{+}$ and  $\sigma \in [0, 1)$ 
be given, and set $k=1$.
\item[(1)] Choose  $M_k\in \M^{\Z}_{+} $ and find   $( z_k,\tilde{z}_k,\eta_k) \in \Z \times \Z \times  \mathbb{R}_{+}$ 
   such that 
     \begin{align}\label{breg-subpro}
&  r_k:=M_k(z_{k-1}-{z_k})  \in  T(\tz_k), \\[2mm]
& \|{z_k}- {\tz}_k\|_{\Z,M_k}^2  +\eta_k \leq \sigma \|{z_{k-1}} -\tz_{k}\|_{\Z,M_k}^2+\eta_{k-1}\label{breg-cond1}.
  \end{align}

\item[(2)] Set $k\leftarrow k+1$ and go to step 1.
\end{itemize}
\noindent
{\bf end}
\end{minipage}
}
\vgap
\vgap
\\
{\bf Remarks}. 1) 
Letting $M_k\equiv I$ and $\eta_k\equiv 0$ in \eqref{breg-subpro}
and \eqref{breg-cond1}, respectively, we find
that the sequences $\{z_k\}$, $\{\tilde z_k\}$ and $\{r_k\}$
satisfy
\begin{align*}
 &r_k\in T(\tilde z_k),\quad \|r_k+\tilde z_k-z_{k-1}\|_{\Z}^2\leq \sigma\|\tilde z_k-z_{k-1}\|_{\Z}^2,\\
 &z_k=z_{k-1}-r_k,
\end{align*}
which is to say that in this case the VM-HPE framework reduces to a special case of the HPE framework (see pp. 2763 in \cite{monteiro2010complexity})
with $\lambda_k\equiv 1$ (in the notation of \cite{monteiro2010complexity}) or, 
in other words, the VM-HPE framework is a generalization of a special case of the HPE framework in
which variations in the metric are allowed along the iterations. 
2)  If the sequence 
$\{M_k\}_{k\geq 0}$ is taken to be constant, then the VM-HPE framework reduces to a special case of the NE-HPE framework studied in \cite{MJR2}.
3) We also mention that a variable metric inexact proximal point method with relative error tolerance was
proposed in \cite{par.lot.sol-cla.sjo08} but, contrary to our framework, the method
of \cite{par.lot.sol-cla.sjo08} demands that every operator $M_k$ must be positive definite.
 Moreover, the convergence analysis presented in \cite{par.lot.sol-cla.sjo08} does not 
include nonasymptotic convergence rates. The fact that the  VM-HPE framework allows
positive semidefinite operators $M_k$ will be crucial for viewing the VM-PADMM 
of Section \ref{sec:amal} as a special instance of it.

From now on in this section, we assume the following condition to hold:
\begin{assumption}
 \label{ass:a1}
 For the sequence $\{M_k\}_{k\geq 1}$ generated by the \emph{VM-HPE framework}, there exist $M_0\in \M^{\Z}_{+}$, $0\leq C_S<\infty$ and, for each $k\geq 0$, $c_k\geq 0$ 
 such that $\{c_k\}_{k\geq 0}$ and  $\{M_k\}_{k\geq 0}$ 
 satisfy 
\begin{equation}
\label{CS}
  \sum_{i=0}^{k}\, c_i\leq C_S, \qquad \frac{1}{1+c_k}M_k\preceq M_{k+1}\preceq (1+c_{k})M_{k}
	\quad \forall \;k\geq 0.
\end{equation}
\end{assumption}
\noindent
{\bf Remark.} The above assumption  (which is similar to  condition (1.4) in \cite{par.lot.sol-cla.sjo08}) is satisfied, for instance, if the sequence 
$\{M_k\}_{k\geq 0}$ is taken to be constant and $c_k\equiv 0$, in which case  one can choose $C_S=0$. 

It is easy to check that Assumption \ref{ass:a1} implies the existence of a constant
$C_P>0$ such that $\{c_k\}_{k\geq 0}$ and  $\{M_k\}_{k\geq 0}$ satisfy
\begin{equation}
 \label{pro:Q12} 
   \prod_{i=0}^{k}(1+c_i)\leq C_P\;\;\;\mbox{and}\;\;\;
   M_{j}\preceq C_P M_k, \quad \forall \,j,k\geq 0.
 \end{equation}

In the remaining part of this section, we present pointwise and ergodic
 convergence rates for the VM-HPE framework. These results will depend
on the quantity:
 \begin{align}
  \label{def:d0}
   d_0 := \inf \{ \|z^*-z_{0}\|_{\Z,\,M_0} \;|\; z^* \in T^{-1}(0)\},
\end{align}
which measures the ``quality'' of the initial guess $z_0\in \mathcal{Z}$ in the VM-HPE framework
with respect to the solution set $T^{-1}(0)$.

For technical reasons and for the convenience of the reader,  the proofs of the next two theorems  
will be given in Appendix \ref{sec:app}.

\begin{theorem} {\bf (Pointwise convergence rate of the VM-HPE framework)} 
\label{th:alpha}
 Let $\{\tilde z_k\}$, $\{r_k\}$ and $\{M_k\}$ be generated by the \emph{VM-HPE framework}. Let also
 $C_P$ and $d_0$ be as in \eqref{pro:Q12} and  \eqref{def:d0}, respectively.
 Then, for every $k \ge 1$, 
 $r_k\in T(\tilde z_k)$ and there exists $i\leq k$ such that
 \begin{align}
    \label{v_ieps_i-bound-a}
	  &\|r_i\|^*_{\Z,M_i} \leq 
    \left(\frac{2(1+\sigma)C_P(d_0^2  +\eta_{0})+2(1-\sigma)\eta_{0}}{(1-\sigma)k}\right)^{1/2}. 
	\end{align}
\end{theorem}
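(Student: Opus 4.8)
The plan is to derive a telescoping-type estimate from the HPE condition \eqref{breg-cond1} combined with the variable-metric control \eqref{CS}, and then extract a pointwise bound on $\|r_i\|^*_{\Z,M_i}$ for the best index $i\le k$. First I would introduce the Lyapunov-type quantity $\varphi_k := \|z^*-z_k\|_{\Z,M_{k+1}}^2 + \eta_k$ for a fixed $z^*\in T^{-1}(0)$ (or work with $\|z^*-z_k\|_{\Z,M_k}^2$ and absorb the metric change via \eqref{CS}). Using $r_k = M_k(z_{k-1}-z_k)\in T(\tilde z_k)$ and $0\in T(z^*)$, monotonicity of $T$ gives $\inner{r_k}{\tilde z_k - z^*}\ge 0$, i.e. $\inner{M_k(z_{k-1}-z_k)}{\tilde z_k - z^*}\ge 0$. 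The standard HPE algebra then rewrites
\[
\|z^*-z_{k-1}\|_{\Z,M_k}^2 \ge \|z^*-z_k\|_{\Z,M_k}^2 + \|z_{k-1}-z_k\|_{\Z,M_k}^2 + 2\inner{M_k(z_{k-1}-z_k)}{z_k-\tilde z_k},
\]
and the cross term is handled by completing the square so that $\|z_{k-1}-z_k\|_{\Z,M_k}^2 + 2\inner{M_k(z_{k-1}-z_k)}{z_k-\tilde z_k} = \|z_{k-1}-\tilde z_k\|_{\Z,M_k}^2 - \|z_k-\tilde z_k\|_{\Z,M_k}^2$. Now invoking \eqref{breg-cond1} in the form $\|z_k-\tilde z_k\|_{\Z,M_k}^2 \ge (1-\sigma)\|z_{k-1}-\tilde z_k\|_{\Z,M_k}^2 + \eta_k - \eta_{k-1}$ gives
\[
\|z^*-z_{k-1}\|_{\Z,M_k}^2 + \eta_{k-1} \;\ge\; \|z^*-z_k\|_{\Z,M_k}^2 + \eta_k + (1-\sigma)\|z_{k-1}-\tilde z_k\|_{\Z,M_k}^2 .
\]

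Next I would convert this into a genuine telescoping inequality in $k$ by passing from $M_k$ to $M_{k-1}$ (or $M_{k+1}$) on the appropriate terms using \eqref{CS}: from $M_{k-1}\preceq(1+c_{k-1})M_k$ and Proposition~\ref{pr:dns} we get $\|z^*-z_{k-1}\|_{\Z,M_{k-1}}^2 \le (1+c_{k-1})\|z^*-z_{k-1}\|_{\Z,M_k}^2$, and similarly $\|z^*-z_k\|_{\Z,M_k}^2\ge \frac{1}{1+c_k}\|z^*-z_k\|_{\Z,M_{k+1}}^2$ would feed the next step. Tracking the multiplicative factors $\prod(1+c_i)$, which by \eqref{pro:Q12} are bounded by $C_P$, yields, after summing from $k=1$ to $K$,
\[
(1-\sigma)\sum_{k=1}^{K}\|z_{k-1}-\tilde z_k\|_{\Z,M_k}^2 \;\le\; C_P\big(\|z^*-z_0\|_{\Z,M_0}^2 + \eta_0\big) + \text{(small correction from }\eta\text{)} .
\]
Taking the infimum over $z^*$ replaces $\|z^*-z_0\|_{\Z,M_0}^2$ by $d_0^2$; one must check that the $\eta$-terms contribute at most a bounded multiple of $\eta_0$, which is where the $2(1-\sigma)\eta_0$ summand in the target bound comes from (a nonnegative $\eta$ correction surviving the telescoping). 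Therefore there exists $i\le K$ with $\|z_{i-1}-\tilde z_i\|_{\Z,M_i}^2 \le \frac{2(1+\sigma)C_P(d_0^2+\eta_0)+2(1-\sigma)\eta_0}{(1-\sigma)K}$ — here I would be careful to produce exactly the constant stated, which likely requires using \eqref{breg-cond1} once more to bound $\|z_i-\tilde z_i\|$ in terms of $\|z_{i-1}-\tilde z_i\|$ (the factor $(1+\sigma)$ rather than $1$ suggests that $\|z_{i-1}-z_i\|_{\Z,M_i}^2 \le 2\|z_{i-1}-\tilde z_i\|_{\Z,M_i}^2 + 2\|z_i - \tilde z_i\|_{\Z,M_i}^2 \le 2(1+\sigma)\|z_{i-1}-\tilde z_i\|_{\Z,M_i}^2$ via \eqref{eq:500} and \eqref{breg-cond1}).

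Finally, to convert a bound on $\|z_{i-1}-z_i\|_{\Z,M_i}$ into the claimed bound on $\|r_i\|^*_{\Z,M_i}$, I would invoke Proposition~\ref{pr:dn}: since $r_i = M_i(z_{i-1}-z_i)$, we have $\|r_i\|^*_{\Z,M_i} = \|M_i(z_{i-1}-z_i)\|^*_{\Z,M_i} = \|z_{i-1}-z_i\|_{\Z,M_i}$, so the dual-seminorm bound is literally the primal-seminorm bound just obtained, and $r_i\in T(\tilde z_i)$ is immediate from \eqref{breg-subpro}. The main obstacle I anticipate is bookkeeping the variable-metric factors so that they collapse to the single constant $C_P$ without accumulating an unbounded product, and simultaneously handling the $\eta_k$ sequence — which is only known to satisfy a one-sided recursion through \eqref{breg-cond1} — so that the telescoped sum of $\eta$-increments is controlled by $\eta_0$ alone; getting the precise numerical constant $2(1+\sigma)C_P(d_0^2+\eta_0)+2(1-\sigma)\eta_0$ right (as opposed to merely $O((d_0^2+\eta_0)/k)$) will require threading these two estimates together carefully.
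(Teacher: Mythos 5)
Your proposal is correct and follows essentially the same route as the paper: the identity of Lemma~\ref{lema_desigualdadesB} plus monotonicity of $T$ gives Lemma~\ref{lema_desigualdades}(a), the metric-change bounds \eqref{CS}--\eqref{pro:Q12} telescope it into Lemma~\ref{lema_desigualdades}(b), and then \eqref{eq:500} with \eqref{breg-cond1} yields $\|z_{i-1}-z_i\|_{\Z,M_i}^2\le 2(1+\sigma)\|z_{i-1}-\tilde z_i\|_{\Z,M_i}^2+2(\eta_{i-1}-\eta_i)$, whose telescoped $\eta$-part is exactly the $2(1-\sigma)\eta_0$ term, finishing with Proposition~\ref{pr:dn} and $\sum_i t_i\ge k\min_i t_i$. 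You correctly identified every ingredient, including where the $(1+\sigma)$ factor and the extra $\eta_0$ summand originate.
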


\noindent
{\bf Remarks}. 1) If $c_k\equiv0$ in Assumption \ref{ass:a1} (in which case $M_k\equiv M_0$), then the upper bound in \eqref{v_ieps_i-bound-a} with $C_S=0$ and $C_P=1$ reduces essentially to a special case of  \cite[Theorem 3.3(a)]{MJR2} (with $\lambda_k\equiv 1, \varepsilon_k\equiv 0$ and $d(w)_z(z')=(1/2)\|z-z'\|^2$). Additionally, if $M_0=I$ and $\eta_0=0$, then the bound \eqref{v_ieps_i-bound-a}  becomes similar to the corresponding one in  \cite[Theorem 4.4(a)]{monteiro2010complexity}. 2) For a given tolerance $\rho>0$, Theorem \ref{th:alpha} ensures that there exists an index
\begin{align}
 \label{eq:104}
i=\mathcal{O}\left(\left\lceil\dfrac{C_p(d_0^2+\eta_0)}{\rho^2}\right\rceil\right)
\end{align}
such that
 \begin{align}
 \label{eq:103}
 r_i\in T(\tilde z_i)\;\;\mbox{and}\;\;\|r_i\|_{\Z,M_i}^*\leq \rho.
\end{align}
In this case, $\tilde z_i\in \Z$ can be interpreted  as a $\rho$-approximate solution 
of \eqref{eq:inc.p} with residual $r_i\in \Z$ (see, e.g., \cite{monteiro2010complexity} for the
definition of a related concept). 3) Although $M_i$ may not be invertible, criterion \eqref{eq:103} makes sense due to the fact that $r_i$ belongs to the image of $M_i$ (see \eqref{breg-subpro}). Indeed, if  $\|r_i\|_{\Z,M_i}^*=0$, then \eqref{breg-subpro} and Proposition~\ref{pr:dn} imply that $r_i=0$, and hence it follows from \eqref{eq:103}  that $\tilde z_i$ is a solution of problem~\eqref{eq:inc.p}.

Before presenting the ergodic convergence of the VM-HPE framework, let us define 
the ergodic sequences $\{\tilde z_k^a\}$, $\{r_k^a\}$
and $\{\varepsilon_k^a\}$ associated to
$\{\tilde z_k\}$ and $\{r_k\}$ as follows:
\begin{align}
 \label{SeqErg}
\tilde z^a_{k} := \frac{1}{{k}} \sum_{i=1}^k \tilde z_i,\quad  
 r^a_{k} := \frac{1}{{k}} \sum_{i=1}^k r_i, \quad
\varepsilon^a_{k} := \frac{1}{{k}} \sum_{i=1}^k   \inner{r_i}{\tilde z_i -\tilde z^a_{k}}.
\end{align}
%


\begin{theorem} {\bf (Ergodic convergence rate  of  the VM-HPE framework)}
\label{th:vhpe.eg}
Let $\{\tilde z_k^a\}$, $\{r_k^a\}$ and $\{\varepsilon_k^a\}$ be given as in
\eqref{SeqErg} and $\{M_k\}$ be generated by the \emph{VM-HPE framework}. Let also  $C_S$, $C_P$ and $d_0$ be as in  \eqref{CS}, \eqref{pro:Q12} and  \eqref{def:d0}, respectively.
Then, for every $k\geq 1$, we have $r^a_k \in T^{\varepsilon^a_{k}}(\tz^a_k)$ and
\begin{align}
  \label{th:vhpe.eg02} 
  &\|r_k^a\|^*_{\Z,M_k}  \leq \dfrac{\mathcal{E}\sqrt{d_0^2+\eta_0}}{k},\\
  \label{th:vhpe.eg03}
	&0\leq \varepsilon^a_{k} \leq\frac{\widehat{\mathcal{E}} (d_0^2+\eta_0)}{k},
\end{align}
where 
$\mathcal{E}:= (1+C_P)\left(\sqrt{C_P}+C_SC_P\right)+C_SC_P^{3/2}$ and \,$\widehat{\mathcal{E}} :=2C_P(1+C_S)\left[{\sigma C_P}/{(1-\sigma)}+2(1+C_P)\right]$.
\end{theorem}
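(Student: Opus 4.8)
\textbf{Proof proposal for Theorem \ref{th:vhpe.eg}.}
The plan is to derive the ergodic estimates from the pointwise machinery already built for Theorem \ref{th:alpha}, combined with the weak transportation formula (Theorem \ref{th:tf}). First, since $r_i \in T(\tilde z_i)$ for each $i$ by \eqref{breg-subpro}, part (a) of Theorem \ref{th:tf} applied to the finite family $\{(\tilde z_i, r_i)\}_{i=1}^k$ immediately gives $\varepsilon_k^a \geq 0$ and $r_k^a \in T^{\varepsilon_k^a}(\tilde z_k^a)$, which is the first assertion. The real work is the two quantitative bounds. For \eqref{th:vhpe.eg02}, I would start from $r_k^a = \frac1k \sum_{i=1}^k r_i = \frac1k \sum_{i=1}^k M_i(z_{i-1}-z_i)$ and estimate $\|r_k^a\|^*_{\Z,M_k}$ by the triangle inequality for the dual seminorm, $\|r_k^a\|^*_{\Z,M_k} \leq \frac1k \sum_{i=1}^k \|M_i(z_{i-1}-z_i)\|^*_{\Z,M_k}$. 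The key technical point is to pass from the ``floating'' metric $M_k$ to each $M_i$: using $M_i \preceq C_P M_k$ from \eqref{pro:Q12} together with Proposition \ref{pr:dns}, one gets $\|\cdot\|^*_{\Z,M_k} \leq \sqrt{C_P}\,\|\cdot\|^*_{\Z,M_i}$, and then Proposition \ref{pr:dn} gives $\|M_i(z_{i-1}-z_i)\|^*_{\Z,M_i} = \|z_{i-1}-z_i\|_{\Z,M_i}$.

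This reduces everything to controlling $\sum_{i=1}^k \|z_{i-1}-z_i\|_{\Z,M_i}$. I expect that the telescoping/summability analysis already carried out in the appendix for Theorem \ref{th:alpha} yields, as an intermediate step, a bound of the form $\sum_{i=1}^k \|z_{i-1}-z_i\|^2_{\Z,M_i} \lesssim C_P(d_0^2+\eta_0)$ (this is morally what makes the pointwise $\mathcal{O}(1/\sqrt k)$ rate work). From such an $\ell^2$-type bound, Cauchy–Schwarz gives $\sum_{i=1}^k \|z_{i-1}-z_i\|_{\Z,M_i} \leq \sqrt{k}\,\bigl(\sum_{i=1}^k \|z_{i-1}-z_i\|^2_{\Z,M_i}\bigr)^{1/2} \lesssim \sqrt{k}\,\sqrt{C_P(d_0^2+\eta_0)}$, hence $\|r_k^a\|^*_{\Z,M_k} \lesssim \sqrt{C_P}\cdot \frac{1}{k}\sqrt{k}\sqrt{C_P(d_0^2+\eta_0)} = \mathcal{O}(\sqrt{d_0^2+\eta_0}/\sqrt k)$. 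To upgrade this to the claimed $\mathcal{O}(1/k)$ in \eqref{th:vhpe.eg02}, I would instead split $z_{i-1}-z_i$ as $(z_{i-1}-\tilde z_i) + (\tilde z_i - z_i)$, control $\|\tilde z_i - z_i\|$ by the HPE error condition \eqref{breg-cond1} (which makes it geometrically small relative to $\|z_{i-1}-\tilde z_i\|$), and telescope the distances $\|z^* - z_i\|_{\Z,M_i}$ to a solution $z^*$ using the Fejér-type monotonicity that \eqref{breg-cond1} and Assumption \ref{ass:a1} together guarantee; the sum $\sum_i (z_{i-1}-z_i)$ then telescopes to $z_0 - z_k$, whose $M_k$-norm is $\mathcal{O}(\sqrt{C_P}\,d_0)$. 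Carefully bookkeeping the metric-distortion factors $(1+c_i)$ through \eqref{CS}–\eqref{pro:Q12} is what produces the precise constant $\mathcal{E} = (1+C_P)(\sqrt{C_P}+C_SC_P)+C_SC_P^{3/2}$.

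For the bound \eqref{th:vhpe.eg03} on $\varepsilon_k^a$, I would use the algebraic identity (valid because $\tilde z_k^a$ is the average) $\varepsilon_k^a = \frac1k \sum_{i=1}^k \inner{r_i}{\tilde z_i - \tilde z_k^a} = \frac1k\sum_{i=1}^k \inner{r_i - r_k^a}{\tilde z_i - \tilde z_k^a}$, and then a standard symmetrization rewrites this as $\frac{1}{k^2}\sum_{1\le i<j\le k}\inner{r_i-r_j}{\tilde z_i - \tilde z_j}$ (or, more simply, bound $\varepsilon_k^a \le \frac1k \sum_i \|r_i\|^*_{\Z,M_i}\|\tilde z_i - \tilde z_k^a\|_{\Z,M_i}$ after inserting the metrics via the same Propositions \ref{pr:dn}–\ref{pr:dns} tricks). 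Either way, I need (i) each $\|r_i\|^*_{\Z,M_i} = \|z_{i-1}-z_i\|_{\Z,M_i}$ to be summable in $\ell^2$ as above, and (ii) a uniform-in-$i$ bound on $\|\tilde z_i - \tilde z_k^a\|_{\Z,M_i}$, which follows from the boundedness of $\{\|z_i - z^*\|\}$ (Fejér monotonicity) and the relation between $\tilde z_i$ and $z_{i-1},z_i$; the term $\sigma C_P/(1-\sigma)$ in $\widehat{\mathcal{E}}$ comes precisely from estimating $\|\tilde z_i - z_{i-1}\|$ via \eqref{breg-cond1}. The main obstacle, and the place where the appendix proof must be most careful, is the simultaneous handling of three moving pieces — the varying metrics $M_k$ (requiring the $C_P$ distortion estimates at every step), the HPE slack controlled by $\sigma$ and $\eta_k$, and the averaging — so that all constants assemble into exactly $\mathcal{E}$ and $\widehat{\mathcal{E}}$; the individual inequalities are routine, but tracking them without losing the $\mathcal{O}(1/k)$ rate is delicate.
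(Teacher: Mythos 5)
Your treatment of the inclusion $r_k^a \in T^{\varepsilon_k^a}(\tilde z_k^a)$ via Theorem \ref{th:tf}(a), and your telescoping strategy for \eqref{th:vhpe.eg02}, are essentially the paper's: the appendix writes $k\,r_k^a=\sum_{i=1}^k M_i(z_{i-1}-z_i)$ and performs an Abel summation, $k\,r_k^a=M_1(z_0-z^*)-M_k(z_k-z^*)+\sum_{i=1}^{k-1}(M_{i+1}-M_i)(z_i-z^*)$, then bounds the correction terms by writing $M_{i+1}-M_i=L_i-c_iM_{i+1}$ with $0\preceq L_i\preceq c_i(2+c_i)M_i$ so that Proposition \ref{pr:dn} can be applied to the positive semidefinite $L_i$. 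You gesture at this bookkeeping, but the identity itself is the whole mechanism and cannot be left implicit: since the metrics vary, $\sum_i M_i(z_{i-1}-z_i)$ does \emph{not} telescope to $M(z_0-z_k)$ for any single $M$, so the decomposition with the $(M_{i+1}-M_i)$ remainders (summable thanks to $\sum_i c_i\le C_S$) is what actually produces the $\mathcal{O}(1/k)$ rate and the constant $\mathcal{E}$.

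The genuine gap is in your argument for \eqref{th:vhpe.eg03}. Both routes you propose --- the termwise bound $\varepsilon_k^a\le\frac1k\sum_i\|r_i\|^*_{\Z,M_i}\|\tilde z_i-\tilde z_k^a\|_{\Z,M_i}$ and the symmetrized double sum --- deliver only $\mathcal{O}(1/\sqrt{k})$, not $\mathcal{O}(1/k)$. Indeed, with $\sum_{i=1}^k\bigl(\|r_i\|^*_{\Z,M_i}\bigr)^2\le C$ (which is all the pointwise analysis gives) and $\|\tilde z_i-\tilde z_k^a\|_{\Z,M_i}\le D$ uniformly, Cauchy--Schwarz yields $\frac1k\sum_i\|r_i\|^*_{\Z,M_i}\,D\le \frac{D}{k}\sqrt{k}\sqrt{C}=D\sqrt{C}/\sqrt{k}$, and the double-sum form with its $O(k^2)$ terms loses the same factor. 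The paper instead proves Proposition \ref{a2909}: it applies Lemma \ref{lema_desigualdadesB} with the \emph{ergodic point} $\tilde z_k^a$ playing the role of $z^*$, so that \eqref{breg-subpro}--\eqref{breg-cond1} give $\|\tilde z_k^a-z_{i-1}\|^2_{\Z,M_i}-\|\tilde z_k^a-z_i\|^2_{\Z,M_i}+\eta_{i-1}-\eta_i\ge 2\inner{r_i}{\tilde z_i-\tilde z_k^a}$, and sums over $i$: the left-hand side telescopes (up to the corrections $c_{i-1}\|\tilde z_k^a-z_{i-1}\|^2_{\Z,M_{i-1}}$, which are summable by \eqref{CS} once each $\|\tilde z_k^a-z_{i-1}\|^2_{\Z,M_{i-1}}$ is shown to be $\mathcal{O}(1)$), so that $2k\varepsilon_k^a=2\sum_i\inner{r_i}{\tilde z_i-\tilde z_k^a}$ is bounded independently of $k$. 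The essential point your proposal misses is that the sum of the inner products must be controlled by a telescoping difference of squared distances anchored at the average, not by a sum of products of norms; without that device the $1/k$ rate for $\varepsilon_k^a$ is unreachable.
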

\noindent
{\bf Remarks.} 
1) Similarly to the first remark after Theorem~\ref{th:alpha}, Theorem~\ref{th:vhpe.eg} is also related to  \cite[Theorem 3.4]{MJR2} and  \cite[Theorem 4.7]{monteiro2010complexity}.  
2) For given tolerances $\rho,\varepsilon>0$, Theorem~\ref{th:vhpe.eg}  ensures that in at most
\begin{align}
 \label{eq:1104}
 \mathcal{O}\left((1+C_S)C_p^2\max\left\{\left\lceil\dfrac{\sqrt{d_0^2+\eta_0}}{\rho}\,\right\rceil,
\left\lceil\dfrac{d_0^2+\eta_0}{\varepsilon}\,\right\rceil
\right\}\right)
\end{align}
iterations there hold
 \begin{align}
 \label{eq:1103}
 r^a_k\in T^{\varepsilon_k^a}(\tilde z^a_k),\;\;\|r^a_k\|_{\Z,M_k}^*\leq \rho \;\;\mbox{and} \;\;\varepsilon_k^a\leq \varepsilon.
\end{align}
 Note that  \eqref{eq:1104}, in terms of the  dependence on $\rho>0$, is better than the bound in \eqref{eq:104} by a factor of 
$\mathcal{O}\left(\rho\right)$ but, on the other
hand, since $\varepsilon_k^a$ can be strictly positive, the inclusion in \eqref{eq:1103} is potentially weaker than
the one in \eqref{eq:103}. 


\section{A  variable metric proximal  alternating direction method of multipliers}
\label{sec:amal}
This section contains two subsections. In Subsection \ref{sec:vmadmm.g},
we formally state the VM-PADMM
\eqref{def:xk-padmm}--\eqref{def:gammak} and  present its nonasymptotic  convergence rates. The main results are
Theorems \ref{th:maintheoADMM} and \ref{th:ergodicPADMM} in which   pointwise
and ergodic  convergence rates are obtained, respectively. The proofs of the latter theorems are discussed separately in Subsection \ref{subsec:proof} by
viewing the method as an  instance of the VM-HPE framework and by applying the results of Section~\ref{sec:smhpe}.

\subsection{VM-PADMM and its  convergence rates}
\label{sec:vmadmm.g}
Let $\X$, $\Y$ and $\Gamma$ be finite-dimensional real inner product vector spaces.
Consider the convex optimization problem \eqref{optl1}, i.e.,
\begin{align}
 \label{optl}
 \begin{aligned}
  &\text{minimize }    \;\;\;f(x) + g(y)&\\
  &\text{subject to }  \;\;Ax + By = b,\;&
 \end{aligned}
\end{align}
where the following assumptions are assumed to hold:
\begin{itemize}
\item[\bf(O1)] $f: \X \to \overline{\mathbb{R}}$ and $g:\Y \to \overline{\mathbb{R}}$ are proper
closed and convex functions;
\item[\bf(O2)] $A: \X \to \Gamma$ and $B: \Y \to  \Gamma$ are linear operators and $b \in \Gamma$;
\item[\bf(O3)] the solution set of \eqref{optl} is nonempty.
\end{itemize}
%
Under the above assumptions and standard constraint qualifications (see, e.g.,\cite[Corollaries~28.2.2 and 28.3.1]{Rockafellar70}), a vector $(x^*,y^*)\in \X\times \Y$ is a solution of
\eqref{optl} if and only if there exists a (Lagrange multiplier) $\gamma^*\in \Gamma$ 
such that $(x^*,y^*,\gamma^*)$ is a solution of
\begin{align}
 \label{eq:fooc}
0 \in   \partial f(x)- A^*{\gamma}, \quad 0 \in \partial g(y)- B^*{\gamma}, \quad Ax+By-b=0.
\end{align}
Motivated by the above statement, 
we define 
\begin{align}
 \Omega^*:=\left\{(x^*,y^*,\gamma^*)\in \X\times \Y\times \Gamma\;|\;
(x^*,y^*,\gamma^*)\;\mbox{is a solution of}\;\eqref{eq:fooc}\right\},\label{def:ess}
\end{align}
which is assumed to be nonempty.

The  convergence rates of the VM-PADMM (stated below)  for solving \eqref{optl} will be obtained by viewing  the optimization problem \eqref{optl}
as the  monotone inclusion \eqref{eq:fooc}, which is associated to a certain maximal monotone operator (see \eqref{FAB})
in $\X\times \Y\times \Gamma$, and by applying the results of the previous section.


\vgap
\vgap

\noindent
\fbox{
\begin{minipage}[h]{6.4 in}
{\bf Variable metric proximal  alternating direction method of multipliers (VM-PADMM).}
\begin{itemize}
\item[(0)] Let $(x_0,y_0,\gamma_0) \in \X\times \Y\times  {\Gamma}$ and $\theta\in (0,(\sqrt{5}+1)/2)$  be given, and set $k=1$.
\item[(1)]  Choose $R_k\in \M^{\X}_{+}$, $S_k\in \M^{\Y}_{+}$ and $H_k\in \M^{\Gamma}_{++}$
            and  compute an optimal solution $x_k \in \X$  of the subproblem
 \begin{equation} 
 \label{def:tsk-admm}
   \min_{x \in \X} \left \{ f(x) - \inner{ {\gamma}_{k-1}}{Ax}_\X +
   \frac{1}{2} \| Ax +By_{k-1} -b \|^2_{\Gamma,H_k}+\frac{1}{2}\|x- x_{k-1}\|_{\X,R_k}^2 \right\}
\end{equation}
 and compute an optimal solution $y_k\in \Y$ of the subproblem
\begin{equation} 
\label{def:tyk-admm}
 \min_{y \in \Y} \left \{ g(y) - \inner{ {\gamma}_{k-1}}{By}_\Y +
 \frac{1}{2} \| Ax_k+By -b\|^2_{\Gamma,H_k} +\frac{1}{2}\|y- y_{k-1}\|_{\Y,S_k}^2\right\}.
\end{equation}
\item[(2)] Set 
\begin{equation}
  \label{admm:eqxk}
  \gamma_k = \gamma_{k-1}-\theta H_k\left(Ax_k+By_k-b\right),
\end{equation}
$k \leftarrow k+1$, and go to step~(1).
\end{itemize}
\noindent
{\bf end}
\end{minipage}
}

\vgap
\noindent
{\bf Remarks}. 1)  As already mentioned in Section \ref{sec:int}, the VM-PADMM can be regarded as a class of ADMM instances, allowing a unified study of different variants of ADMM.
 2) An usual choice for the linear operator $H_k$ is $\beta_k I$, where $\beta_k>0$ plays the role
of a penalty parameter. 3) The proximal terms in \eqref{def:tsk-admm} and \eqref{def:tyk-admm} defined by $R_k$ and $S_k$, respectively,  may have different roles. Namely, they can be used to regularize the subproblems in
\eqref{def:tsk-admm} and \eqref{def:tyk-admm}, making them strongly convex (when $R_k$ and $S_k$ are positive definite operators) and hence admitting unique solutions. Moreover,  
by a careful choice of these operators,  subproblems \eqref{def:tsk-admm} and \eqref{def:tyk-admm} may become much easier to solve; for instance, if $H_k=\beta_k I$, then $R_k=\tau_k I-\beta_k A^*A$ with $\tau_k>\beta_k\|A^*A\|$ and  $S_k=s_kI-\beta_k B^*B$ with $s_k>\beta_k\|B^*B\|$ eliminate the presence of quadratic forms associated to $A^*A$ and $B^*B$ in \eqref{def:tsk-admm} and \eqref{def:tyk-admm}, respectively. 

%

From now on in this section, the following conditions are assumed
to hold:
%

\begin{assumption}
 \label{ass:b1} 
For the sequences $\{R_k\}_{k\geq 1}$, $\{S_k\}_{k\geq 1}$ and $\{H_k\}_{k\geq 1}$
generated by the \emph{VM-PADMM}, there exist $R_0\in \M_{+}^{\X}$, $S_0\in \M_{+}^{\Y}$, $H_0\in \M_{++}^{\Gamma}$, $0\leq C_S<\infty$
and, for each $k\geq 0$,  $c_k\in[0,1]$
such that $\{c_k\}_{k\geq 0}$,
 $\{Q_{k,1}:=R_k\}_{k\geq 0}$, $\{Q_{k,2}:=S_k\}_{k\geq 0}$ and $\{Q_{k,3}:=H_k\}_{k\geq 0}$ satisfy
\begin{align}
 \label{CS2}
  \sum_{i=0}^{k} c_i\leq C_S,\quad 
\frac{1}{1+c_k}Q_{k,j}\preceq Q_{k+1,j}\preceq (1+c_{k})Q_{k,j} \quad \forall k\geq 0,\;\; j=1,2,3.
\end{align}
\end{assumption}
Analogously to condition \eqref{pro:Q12}, assumption \ref{ass:b1} implies the
existence of $C_P>0$ such that $\{c_k\}_{k\geq 0}$ satisfies
\begin{align}
 \label{pro:Q122}
 \prod_{i=0}^k\,(1+c_i)\leq C_P\qquad \forall k\geq 0.
\end{align}
\noindent
We mention that Assumption \ref{ass:b1} is similar to Condition C in \cite{he.lia.der-new.mp02} but, contrary to 
the latter reference, none of the operators $R_k$ and $S_k$  is assumed to be positive definite.

Similarly to the previous section, the following quantity will be needed:
{\small
\begin{equation}
 \label{def:d0admm}
d_0:=\inf \left\{ \left(\|x_0-x^*\|^2_{\X,R_0}+\|y_0-y^*\|_{\Y,(B^*H_0B+S_0)}^2+\|\gamma_0-\gamma^*\|^2_{\Gamma,\theta^{-1}H_0^{-1}}\right)^{1/2} \;|\;(x^*,y^*,\gamma^*)\in \Omega^*\right\},
\end{equation}}
where $(x_0,y_0,\gamma_0)$ and $\theta$ are given in Step~(0) of the VM-PADMM, $R_0\in \M_{+}^{\X}$, $S_0\in \M_{+}^{\Y}$ and $H_0\in \M_{++}^{\Gamma}$
 are given in Assumption \ref{ass:b1}, and $\Omega^*$ is defined in \eqref{def:ess}.
 
\vgap
Next we present the two main results of this paper, whose proofs are 
given in  Subsection~\ref{subsec:proof}.

\begin{theorem}{\bf (Pointwise convergence rate of the VM-PADMM)}
 \label{th:maintheoADMM} 
Let $\{(x_k,y_k,\gamma_k)\}$, $\{R_k\}$, $\{S_k\}$ and $\{H_k\}$ be generated by the \emph{VM-PADMM}  
and let 
\begin{align}
 \label{xtilde}
   \tilde{\gamma}_{k}:={\gamma}_{k-1}-H_k(Ax_k+By_{k-1}-b)\quad \forall\,k\geq 1.
 \end{align}
Let also $C_P$
and $d_0$ be as in \eqref{pro:Q122} and \eqref{def:d0admm}, respectively.
Then, there exists a parameter $\sigma_\theta\in (0,1)$ such that, for all $k\geq 1$,
\begin{equation}
 \label{eq:main.pc.admm} 
 \left(
\begin{array}{c}
 r_{k,x}\\[1mm]
 r_{k,y}\\[1mm]
 r_{k,\gamma}\\[1mm]
\end{array} 
 \right)
:= 
\left( 
 \begin{array}{c} 
 R_k (x_{k-1}-x_k)\\[1mm]  
 (B^*H_kB+S_k)(y_{k-1}-y_k)\\[1mm]  
 \theta^{-1}H_k^{-1}(\gamma_{k-1}-\gamma_k)
 \end{array} 
 \right) \in 
 \left( 
 \begin{array}{c} 
 \partial f(x_k)-A^*\tilde{\gamma}_k\\\partial g(y_k)-B^*\tilde{\gamma}_k\\Ax_k+By_k-b
 \end{array} \right)
 \end{equation}
and, for some  $i\leq k$,
{\small
\begin{align}
 \label{eq:main.pc.admm02}
\max\left\{\|r_{i,x}\|^*_{\X,R_i}\,,\;\|r_{i,y}\|^*_{\Y,(B^*H_iB+S_i)}\,,\;\|r_{i,\gamma}\|^*_{\Gamma,\theta^{-1}H_i^{-1}}\right\}
	\leq 
	\frac{{d_0}}{\sqrt{k}} \sqrt{ \frac{2(1+\sigma_{\theta})C_P(1+\tau_{\theta})+2(1-\sigma_{\theta})\tau_{\theta}}{(1-\sigma_{\theta})}},
\end{align}}
  where $\tau_\theta:=(8(\sigma_\theta+\theta-1)\max\{1,\theta/{(2-\theta)}\})/\sqrt{\theta^3}$.
%
%
\end{theorem}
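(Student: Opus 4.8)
The plan is to prove Theorem~\ref{th:maintheoADMM} by exhibiting the VM-PADMM as a concrete instance of the VM-HPE framework applied to a suitably chosen maximal monotone operator $T$ on $\Z:=\X\times\Y\times\Gamma$, and then simply invoking Theorem~\ref{th:alpha}. The natural candidate for $T$ is the operator associated with the optimality system \eqref{eq:fooc}, i.e. $T(x,y,\gamma):=\bigl(\partial f(x)-A^*\gamma,\;\partial g(y)-B^*\gamma,\;Ax+By-b\bigr)$ (this is the operator referred to as \eqref{FAB} in the text); one first checks it is maximal monotone, since it is the sum of the maximal monotone subdifferential operator $\partial(f\oplus g)$ composed with appropriate linear maps and the skew-symmetric-plus-affine coupling, which is maximal monotone by standard arguments. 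The zero set of $T$ is exactly $\Omega^*$ from \eqref{def:ess}, which is nonempty by hypothesis, so Assumption on $T^{-1}(0)\neq\emptyset$ holds.

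The core of the argument is to identify the right metric operators $M_k$ so that the VM-PADMM iteration becomes the VM-HPE step \eqref{breg-subpro}--\eqref{breg-cond1}. Guided by the definition of $d_0$ in \eqref{def:d0admm} and the residuals in \eqref{eq:main.pc.admm}, the choice should be the block-diagonal operator
\begin{align*}
M_k := \begin{pmatrix} R_k & 0 & 0\\ 0 & B^*H_kB+S_k & 0\\ 0 & 0 & \theta^{-1}H_k^{-1}\end{pmatrix}
\end{align*}
acting on $\Z=\X\times\Y\times\Gamma$, with the iterates $z_k=(x_k,y_k,\gamma_k)$ and $\tilde z_k=(x_k,y_k,\tilde\gamma_k)$, where $\tilde\gamma_k$ is given by \eqref{xtilde}. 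First I would verify the inclusion $r_k:=M_k(z_{k-1}-z_k)\in T(\tilde z_k)$: writing out the first-order optimality conditions of subproblems \eqref{def:tsk-admm} and \eqref{def:tyk-admm} and eliminating $\gamma$ via the update \eqref{admm:eqxk}, one gets that $R_k(x_{k-1}-x_k)\in\partial f(x_k)-A^*\tilde\gamma_k$, that $(B^*H_kB+S_k)(y_{k-1}-y_k)\in\partial g(y_k)-B^*\tilde\gamma_k$ (here the cross term $B^*H_kB$ appears precisely because the $y$-subproblem uses $Ax_k$, not $Ax_{k-1}$), and that $\theta^{-1}H_k^{-1}(\gamma_{k-1}-\gamma_k)=Ax_k+By_k-b$ from \eqref{admm:eqxk}; this is exactly the displayed inclusion \eqref{eq:main.pc.admm}. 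Next I would establish the error condition \eqref{breg-cond1}: one needs to choose $\eta_k$ (built from $\|\gamma_k-\tilde\gamma_k\|$-type quantities, i.e. from $Ax_k+By_{k-1}-b$ versus $Ax_k+By_k-b$) and find a $\sigma=\sigma_\theta\in(0,1)$ for which
\begin{align*}
\|z_k-\tilde z_k\|_{\Z,M_k}^2+\eta_k \le \sigma\|z_{k-1}-\tilde z_k\|_{\Z,M_k}^2+\eta_{k-1}.
\end{align*}
Since $z_k$ and $\tilde z_k$ differ only in the third block ($\gamma_k$ vs. $\tilde\gamma_k$), the left norm reduces to $\theta^{-1}\|\gamma_k-\tilde\gamma_k\|_{H_k^{-1}}^2=\theta\|B(y_k-y_{k-1})\|_{H_k}^2$, while the right norm controls $\|x_{k-1}-x_k\|_{R_k}^2$, $\|(B(y_{k-1}-y_k)\|_{H_k}^2+\|y_{k-1}-y_k\|_{S_k}^2$ and $\theta^{-1}\|\gamma_{k-1}-\tilde\gamma_k\|_{H_k^{-1}}^2$. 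Proving this inequality is a matter of combining the monotonicity of $T$ at $(\tilde z_k,r_k)$ and $(\tilde z_{k-1},r_{k-1})$ with Young-type estimates \eqref{eq:545}; the condition $\theta<(\sqrt5+1)/2$ is exactly what makes the resulting quadratic form in the displacements negative semidefinite for a suitable $\sigma_\theta$, which is where the constant $\tau_\theta=8(\sigma_\theta+\theta-1)\max\{1,\theta/(2-\theta)\}/\sqrt{\theta^3}$ enters as the bound $\eta_0\le\tau_\theta\,d_0^2$ (or, more precisely, $\eta_1\le\tau_\theta\|z_0-\tilde z_1\|^2$ absorbed into the $d_0^2$ term).

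Once the instantiation is complete, I would check Assumption~\ref{ass:a1} for $\{M_k\}$: this follows from Assumption~\ref{ass:b1}, because $R_k$, $S_k$ and $H_k$ (hence also $H_k^{-1}$ and $B^*H_kB+S_k$) all satisfy two-sided $(1+c_k)$ sandwich bounds, and these are preserved under the block-diagonal construction, inversion ($H_{k+1}^{-1}\preceq(1+c_k)H_k^{-1}$), and sums — so the same $C_S$ and $C_P$ from \eqref{pro:Q122} work for $\{M_k\}$. At this point $d_0$ of \eqref{def:d0} for this $T$ and $M_0$ coincides with $d_0$ of \eqref{def:d0admm}, and Theorem~\ref{th:alpha} immediately yields the existence of $i\le k$ with $\|r_i\|_{\Z,M_i}^*\le\bigl(\tfrac{2(1+\sigma_\theta)C_P(d_0^2+\eta_0)+2(1-\sigma_\theta)\eta_0}{(1-\sigma_\theta)k}\bigr)^{1/2}$; substituting $\eta_0\le\tau_\theta d_0^2$ and noting that the block-diagonal dual seminorm of $M_i$ is the max of the three component dual seminorms (so $\|r_i\|_{\Z,M_i}^*\ge\max\{\|r_{i,x}\|^*,\|r_{i,y}\|^*,\|r_{i,\gamma}\|^*\}$ — actually they are equal up to the block structure, which needs a one-line check via Proposition~\ref{pr:dn}) gives \eqref{eq:main.pc.admm02}. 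The main obstacle I anticipate is step two: pinning down the exact form of $\eta_k$ and carrying out the algebra that produces a valid $\sigma_\theta\in(0,1)$ together with the precise constant $\tau_\theta$, since this requires carefully tracking how the "extragradient" correction $\gamma_k\mapsto\tilde\gamma_k$ interacts with the $y$-update and choosing the Young-inequality splitting parameters so that the stepsize bound $\theta<(\sqrt5+1)/2$ is exactly the feasibility threshold — everything else is bookkeeping.
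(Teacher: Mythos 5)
Your overall strategy is exactly the one the paper follows: take $T$ as in \eqref{FAB}, take the block-diagonal metric $M_k=\mathrm{diag}(R_k,\,B^*H_kB+S_k,\,\theta^{-1}H_k^{-1})$ with $z_k=(x_k,y_k,\gamma_k)$ and $\tilde z_k=(x_k,y_k,\tilde\gamma_k)$, obtain the inclusion \eqref{eq:main.pc.admm} from the optimality conditions of \eqref{def:tsk-admm}--\eqref{def:tyk-admm} together with \eqref{admm:eqxk}, transfer Assumption~\ref{ass:b1} to Assumption~\ref{ass:a1}, and then invoke Theorem~\ref{th:alpha} plus Proposition~\ref{pr:dn} to translate $\|r_i\|^*_{\Z,M_i}=\|z_{i-1}-z_i\|_{\Z,M_i}$ into the three component dual seminorms. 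All of that is correct and matches Propositions~\ref{pr:aux} and \ref{maincorADMM}, Lemma~\ref{lm:222}, and the final two displays of the paper's proof.

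However, there is a genuine gap at the step you yourself flag as the main obstacle, and one of the few concrete formulas you write there is wrong. You assert that $\|z_k-\tilde z_k\|_{\Z,M_k}^2=\theta^{-1}\|\gamma_k-\tilde\gamma_k\|_{H_k^{-1}}^2=\theta\|B(y_k-y_{k-1})\|_{H_k}^2$; this holds only for $\theta=1$. In general $\tilde\gamma_k-\gamma_k=\frac{1-\theta}{\theta}(\gamma_k-\gamma_{k-1})+H_kB(y_k-y_{k-1})$ (Lemma~\ref{lem:deltak}(a)), so the error term contains an additional $\frac{(1-\theta)^2}{\theta^2}\|\gamma_k-\gamma_{k-1}\|^2_{\Gamma,\theta^{-1}H_k^{-1}}$ contribution and, crucially, the cross term $\inner{\gamma_k-\gamma_{k-1}}{B(y_k-y_{k-1})}_\Gamma$. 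Controlling that cross term is the entire content of the paper's Lemma~\ref{lem:deltak}(b)--(c) (monotonicity of $\partial g$ between \emph{consecutive} iterates $y_{k-1},y_k$, not monotonicity of $T$ at $(\tilde z_k,r_k)$ and $(\tilde z_{k-1},r_{k-1})$ as you suggest, plus a Young splitting with $t=\sqrt2$ and a separate argument for the base case $k=1$, which is where the factor $\max\{1,\theta/(2-\theta)\}$ in $\tau_\theta$ originates), of the choice $\eta_k=\frac{\sigma_\theta-(\theta-1)^2}{\theta^2}\|\gamma_k-\gamma_{k-1}\|^2_{\Gamma,\theta^{-1}H_k^{-1}}+\frac{\sqrt2(\sigma_\theta+\theta-1)}{\theta}\|y_k-y_{k-1}\|^2_{\Y,S_k}$, and of Lemma~\ref{pro:sigma}, which packages the surviving quadratic form in $(\|\gamma_k-\gamma_{k-1}\|,\|B(y_k-y_{k-1})\|)$ into a $2\times2$ matrix whose positive definiteness for some $\sigma_\theta<1$ is what the restriction $\theta\in(0,(\sqrt5+1)/2)$ buys. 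Since you neither produce $\eta_k$ nor verify that a $\sigma_\theta\in(0,1)$ exists, the existence claim in the theorem and the specific constant $\tau_\theta$ remain unproved; the rest of your argument is sound bookkeeping conditional on that step.
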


\vgap
\noindent
{\bf Remark.} For a given tolerance $\rho>0$,
Theorem \ref{th:maintheoADMM} guarantees the existence of triples 
$(x,y,\tilde \gamma)$, $(r_x,r_y,r_\gamma)$ and operators $R\in \M_+^{\X}$, $S\in \M_+^{\Y}$ and $H\in \M_{++}^{\Gamma}$ 
(generated by the VM-PADMM)
such that
\begin{align}
 \label{eq:comp.rho.e}
 \begin{aligned}
 r_x \in \;& \partial f(x)-A^*\tilde\gamma, \quad r_y\in \partial g(y)-B^*\tilde \gamma, \quad r_\gamma=Ax+By-b,\\[2mm]
 \max&\left\{\|r_x\|^*_{\X,R}\,,\;\|r_y\|^*_{\Y,(B^* HB+S)}\,,\;\|r_\gamma\|^*_{\Gamma,\theta^{-1} H^{-1}}\right\}\leq \rho,
\end{aligned}
\end{align}
in at most 
\begin{align}
 \label{eq:comp.rhoe2}
  \mathcal{O}\left(\left\lceil\dfrac{C_p\,d_0^2}{\rho^2}\right\rceil\right)
\end{align}
iterations, where $C_P$ and $d_0$ are as in \eqref{pro:Q122} and \eqref{def:d0admm}, respectively. The triple $(x,y,\tilde \gamma)$
in  \eqref{eq:comp.rho.e}  can be seen as a $\rho$-approximate solution of the KKT system \eqref{eq:fooc}
with residual $(r_x,r_y,r_\gamma)$.

Before proceeding to present the ergodic convergence of the VM-PADMM we need
to introduce its associated ergodic sequences.
Let $\{(x_k,y_k,\gamma_k)\}$ be generated by the VM-PADMM, let $\{\tilde \gamma_k\}$
and $\{(r_{k,x},r_{k,y},r_{k,\gamma})\}$  be defined as  in \eqref{xtilde} and \eqref{eq:main.pc.admm}, respectively,
and let the \emph{ergodic} sequences associated to them  be defined by
\begin{align}
 \label{def:erg}
 &(x_k^a,y_k^a):= \frac1k \sum_{i=1}^k\left(x_i, y_i\right),\quad \tilde \gamma_k^a:=\frac1k \sum_{i=1}^k\,\tilde \gamma_i,\\
\label{def:erg2} 
&(r^a_{k,x},r^a_{k,y},r^a_{k,\gamma}):=\frac1k\sum_{i=1}^k(r_{i,x},r_{i,y},r_{i,\gamma}),\\
\label{def:erg3b}
&(\varepsilon^a_{k,x},\varepsilon^a_{k,y})
:= \frac{1}{{k}}\sum_{i=1}^k\big(\,\inner{r_{i,x}+A^*\tilde \gamma_i}{x_i-x_k^a}_{\X},\; 
\inner{r_{i,y}+B^*\tilde \gamma_i}{y_i-y_k^a}_{\Y}\,\big).
\end{align}
 

\begin{theorem} {\bf (Ergodic convergence rate of the VM-PADMM)}
\label{th:ergodicPADMM}
Let $\{R_k\}$, $\{S_k\}$ and $\{H_k\}$ be generated by the \emph{VM-PADMM}  and
let $\{(x_k^a,y_k^a)\}$, $\{\tilde \gamma_k^a\}$, $\{(r^a_{k,x},r^a_{k,y},r^a_{k,\gamma})\}$
and $\{(\varepsilon^a_{k,x},\varepsilon^a_{k,y})\}$ be the ergodic sequences defined as in \eqref{def:erg}--\eqref{def:erg3b}.
Let also $C_S$, $C_P$, and $d_0$ be  as in \eqref{CS2},  \eqref{pro:Q122} and
\eqref{def:d0admm}, respectively. Then, there exists a parameter $\sigma_\theta\in (0,1)$ such that, for all $k\geq 1$, there hold $\varepsilon_{k,x}^a,\,\varepsilon_{k,y}^a\geq 0$,
\begin{equation}
 \label{eq:main.ec.admm} 
 \left(
\begin{array}{c}
 r^a_{k,\,x}\\[1mm]
 r^a_{k,\,y}\\[1mm]
 r^a_{k,\,\gamma}\\[1mm]
\end{array} 
 \right)
\in 
 \left( 
 \begin{array}{c} 
 \partial f_{\varepsilon^a_{k,x}}(x_k^a)- A^*\tilde{\gamma}_k^a\\
 \partial g_{\varepsilon^a_{k,y}}(y_k^a)- B^*\tilde{\gamma}_k^a
\\Ax_k^a+By_k^a-b
 \end{array} \right)
 \end{equation}
and
\begin{align}
 \label{ineq:Th_ergodic123}
& \max\left\{\|r_{k,\,x}^a\|_{\X, R_k}^*,\, \|r_{k,\,y}^a\|_{\Y, (B^*H_kB^*+S_k)}^*,\,\|r^a_{k,\,\gamma}\|_{\Gamma,\theta^{-1}H^{-1}_k}^*\right\}
\leq \frac{\sqrt{1+\tau_{\theta}}\mathcal{E}\,d_0 }{k}\,,\\
\label{ine:eps1245}
&\varepsilon^a_{k,x}+\varepsilon^a_{k,y}\leq  \frac{(1+\tau_{\theta})\widehat{\mathcal{E}}d_0^2}{k},
\end{align}
where $\mathcal{E}$ and $\widehat{\mathcal{E}}$   are   as in Theorem~\ref{th:vhpe.eg} with $\sigma=\sigma_\theta$ and $\tau_{\theta}$ is as in  Theorem~\ref{th:maintheoADMM}.
\end{theorem}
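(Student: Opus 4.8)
The strategy mirrors that of the pointwise theorem: realize the VM-PADMM as an instance of the VM-HPE framework applied to a suitable maximal monotone operator, and then invoke the ergodic estimates of Theorem~\ref{th:vhpe.eg}. Concretely, one works in the product space $\Z:=\X\times\Y\times\Gamma$ with the operator $T$ whose graph encodes the optimality system \eqref{eq:fooc} (i.e.\ $T(x,y,\gamma)=\partial f(x)\times\partial g(y)\times\{0\}$ twisted by the linear coupling $A,B$, so that $T^{-1}(0)=\Omega^*$); this $T$ is maximal monotone since $f,g$ are proper closed convex and the coupling is skew-adjoint. The key algebraic fact — presumably established in Subsection~\ref{subsec:proof} for the pointwise theorem and reused verbatim here — is that with the block-diagonal metric
\[
M_k:=\mathrm{diag}\big(R_k,\;B^*H_kB+S_k,\;\theta^{-1}H_k^{-1}\big)
\]
on $\Z$, the VM-PADMM iterates $z_k=(x_k,y_k,\gamma_k)$, together with $\tilde z_k:=(x_k,y_k,\tilde\gamma_k)$ and a suitable $\eta_k$, satisfy the VM-HPE relations \eqref{breg-subpro}--\eqref{breg-cond1} with $\sigma=\sigma_\theta\in(0,1)$; here $r_k=M_k(z_{k-1}-z_k)$ is exactly the left-hand column of \eqref{eq:main.ec.admm}. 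One also must verify that Assumption~\ref{ass:a1} holds for $\{M_k\}$ with the same $C_S$ and a (possibly rescaled) $C_P$: from Assumption~\ref{ass:b1}, $R_k$, $H_k^{-1}$ and $B^*H_kB$ each satisfy the two-sided $(1+c_k)$ sandwich (for $H_k^{-1}$ one inverts the inequalities; for $B^*H_kB$ one conjugates by $B$), and the sum $B^*H_kB+S_k$ inherits it, so $M_k$ does too — this is where one confirms that $d_0$ in \eqref{def:d0admm} coincides with $d_0$ in \eqref{def:d0} for this choice of $M_0$.

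With the identification in place, the ergodic sequences \eqref{def:erg}--\eqref{def:erg3b} are precisely the VM-HPE ergodic sequences \eqref{SeqErg}: $\tilde z_k^a=(x_k^a,y_k^a,\tilde\gamma_k^a)$, $r_k^a=(r_{k,x}^a,r_{k,y}^a,r_{k,\gamma}^a)$, and the scalar $\varepsilon_k^a$ of \eqref{SeqErg} splits, under the product structure and because the third block contributes zero residual-error (the $\gamma$-component of $T$ is single-valued), into $\varepsilon_{k,x}^a+\varepsilon_{k,y}^a$ as defined in \eqref{def:erg3b}. Theorem~\ref{th:vhpe.eg} then gives directly $r_k^a\in T^{\varepsilon_k^a}(\tilde z_k^a)$, $\varepsilon_k^a\ge0$, and the bounds \eqref{th:vhpe.eg02}--\eqref{th:vhpe.eg03} with $d_0^2+\eta_0$ on the right. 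The inclusion $r_k^a\in T^{\varepsilon_k^a}(\tilde z_k^a)$, unwound through the product/twisted structure and Theorem~\ref{th:tf}(b) applied blockwise to $\partial f$ and $\partial g$, is exactly \eqref{eq:main.ec.admm}; and the norm bound \eqref{ineq:Th_ergodic123} follows from \eqref{th:vhpe.eg02} because the $M_k$-dual seminorm on $\Z$ decomposes as the max (up to constants) of the three block dual seminorms appearing there.

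Two quantitative points remain, both already handled at the pointwise stage. First, one must bound $\eta_0$ in terms of $d_0^2$: the construction of $\eta_k$ in the VM-HPE reduction yields $\eta_0\le\tau_\theta\,d_0^2$ (or $\eta_0=0$ can be arranged, depending on the reduction), with $\tau_\theta$ the explicit constant from Theorem~\ref{th:maintheoADMM}; substituting $d_0^2+\eta_0\le(1+\tau_\theta)d_0^2$ into \eqref{th:vhpe.eg02}--\eqref{th:vhpe.eg03} produces the factors $\sqrt{1+\tau_\theta}$ and $(1+\tau_\theta)$ in \eqref{ineq:Th_ergodic123} and \eqref{ine:eps1245}. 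Second, the constants $\mathcal{E},\widehat{\mathcal{E}}$ are carried over unchanged with $\sigma=\sigma_\theta$ and the appropriate $C_S,C_P$. The main obstacle — and the part that genuinely requires work rather than bookkeeping — is the verification of the VM-HPE error condition \eqref{breg-cond1} with a constant $\sigma_\theta<1$: this is where the restriction $\theta\in(0,(\sqrt5+1)/2)$ enters, via a descent-type inequality relating $\|z_k-\tilde z_k\|_{M_k}^2$ and $\|z_{k-1}-\tilde z_k\|_{M_k}^2$ that hinges on completing the square in the $\gamma$-update \eqref{admm:eqxk} and controlling the cross term $\langle By_k-By_{k-1},\,Ax_k+By_k-b\rangle$ coming from the fact that $\tilde\gamma_k$ uses $y_{k-1}$ while $\gamma_k$ uses $y_k$. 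Since that estimate and the definitions of $\sigma_\theta,\tau_\theta,\eta_k$ are identical to those needed for Theorem~\ref{th:maintheoADMM}, I would simply cite the construction established there and then apply Theorem~\ref{th:vhpe.eg}, leaving only the routine block-decomposition of seminorms and the substitution $d_0^2+\eta_0\le(1+\tau_\theta)d_0^2$ to complete the proof.
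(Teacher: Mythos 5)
Your proposal is correct and follows essentially the same route as the paper: Proposition~\ref{maincorADMM} realizes the VM-PADMM as a VM-HPE instance with the block-diagonal $M_k$ of \eqref{seminorm}, $\tilde z_k=(x_k,y_k,\tilde\gamma_k)$ and $\eta_0=\tau_\theta d_0^2$, after which Theorem~\ref{th:vhpe.eg}, the substitution $d_0^2+\eta_0=(1+\tau_\theta)d_0^2$, the blockwise decomposition of the dual seminorm, and Theorem~\ref{th:tf}(b) give the stated conclusions. One caveat: the $\gamma$-block of $\varepsilon_k^a$, namely $\frac1k\sum_i\langle r_{i,\gamma},\tilde\gamma_i-\tilde\gamma_k^a\rangle_\Gamma$, is \emph{not} zero; using $r_{i,\gamma}-r_{k,\gamma}^a=A(x_i-x_k^a)+B(y_i-y_k^a)$ and $\sum_i(\tilde\gamma_i-\tilde\gamma_k^a)=0$, it is precisely this term that supplies the $A^*\tilde\gamma_i$ and $B^*\tilde\gamma_i$ corrections appearing in \eqref{def:erg3b}, so your parenthetical justification for the split $\varepsilon_k^a=\varepsilon_{k,x}^a+\varepsilon_{k,y}^a$ is inaccurate even though the identity itself, and the remainder of the argument, are right.
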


\noindent
{\bf Remark.} Given  tolerances $\rho,\varepsilon>0$, Theorem \ref{th:ergodicPADMM} guarantees that
there exist scalars $\varepsilon_x,\varepsilon_y\geq 0$, triples $(x,y,\tilde \gamma)$, $(r_x,r_y,r_\gamma)$
and operators $R\in \M_+^{\X}$, $S\in \M_+^{\Y}$ and $H\in \M_{++}^{\Gamma}$ 
(generated by the VM-PADMM) such that
\begin{align}
 \label{eq:comp.re.e}
 \begin{aligned}
 r_x\in &\, \partial_{\varepsilon_x} f(x)-A^*\tilde\gamma, \quad r_y\in \partial_{\varepsilon_y} g(y)-B^*\tilde \gamma, 
\quad r_\gamma=Ax+By-b,\\
 \max&\left\{\|r_x\|^*_{\X,R}\,,\;\|r_y\|^*_{\Y,(B^*HB+S)}\,,\;\|r_\gamma\|^*_{\Gamma,\theta^{-1}H^{-1}}\right\}\leq \rho,\\
\varepsilon_x+&\varepsilon_y\leq \varepsilon,
\end{aligned}
\end{align}
in at most 
\begin{align}
 \label{eq:comp.ree2}
   \mathcal{O}\left((1+C_S)C_p^2\max\left\{\left\lceil\dfrac{{d_0}}{\rho}\,\right\rceil,
\left\lceil\dfrac{d_0^2}{\varepsilon}\,\right\rceil
\right\}\right)
	\end{align}
iterations, where $C_S, C_P$ and $d_0$ are as in Assumption \ref{ass:b1}, \eqref{pro:Q122} and \eqref{def:d0admm}, respectively.  
Note that while the dependence on the tolerance $\rho$ in \eqref{eq:comp.ree2} is better
than the corresponding one in \eqref{eq:comp.rhoe2} by a factor
of $\mathcal{O}(\rho)$,  the inclusions in \eqref{eq:comp.re.e} are potentially weaker than  the corresponding ones in \eqref{eq:comp.rho.e}.
 The triple $(x,y,\tilde \gamma)$ in \eqref{eq:comp.re.e} can be seen as a $(\rho,\varepsilon)$-approximate solution of the KKT system \eqref{eq:fooc}
with residual $(r_x,r_y,r_\gamma)$.

\subsection{Proof of Theorems \ref{th:maintheoADMM} and \ref{th:ergodicPADMM}}
 \label{subsec:proof}

The main goal of this subsection is to prove Theorems \ref{th:maintheoADMM} and \ref{th:ergodicPADMM} 
by viewing the VM-PADMM as an instance of the VM-HPE framework of Section \ref{sec:smhpe} for
solving \eqref{eq:inc.p} with $T:\Z\tos \Z$ defined by
\begin{align}
 \label{FAB}
 T(z):=\left(\begin{array}{c} \partial f(x)- A^*{\gamma}\\ \partial g(y)- B^* {\gamma}\\ Ax+By-b
\end{array} \right),\qquad \forall z:=(x,y,\gamma)\in \Z
\end{align}
where $\Z:=\X\times \Y\times \Gamma
$ is endowed with the usual inner product 
of vectors $z=(x,y,\gamma), z'=(x',y',\gamma')$:
\begin{align}
 \label{eq:inner.p}
 \inner{z}{z'}_{\Z}:=\inner{x}{x'}_{\X}+\inner{y}{y'}_{\Y}+\inner{\gamma}{\gamma'}_{\Gamma}.
\end{align}
The desired results will then
follow essentially from Theorems \ref{th:alpha} and \ref{th:vhpe.eg}, and from 
the identity
\begin{align}
 \label{eq:ext.z}
 T^{-1}(0)=\Omega^*,
\end{align}
where $T^{-1}(0)$ and $\Omega^*$  are the solution sets defined in \eqref{eq:inc.p} and  \eqref{def:ess}, respectively. 
The following linear operators will be needed in our analysis:
\begin{align}
\label{seminorm} 
M_k:=\left(\begin{array}{ccc} {R_k}& 0& 0\\ 0&  B^*H_kB+S_k& 0 \\ 0& 0& \theta^{-1}{H_k}^{-1}
\end{array} \right):\Z\to \Z\qquad \forall\, k\geq 0,
\end{align}
where $\{R_k\}_{k\geq 1}$, $\{S_k\}_{k\geq 1}$ and $\{H_k\}_{k\geq 1}$
are generated by the VM-PADMM and $R_0\in \M_{+}^{\X}$, $S_0\in \M_{+}^{\Y}$, $H_0\in \M_{++}^{\Gamma}$
 are given in Assumption \ref{ass:b1}.
%
%

We begin by  presenting a preliminary technical result.

\begin{proposition} 
\label{pr:aux}
 Let  $\{(x_k,y_k,\gamma_k)\}$ be generated by the \emph{VM-PADMM} 
 and let $\{\tilde \gamma_k\}$ be defined as in \eqref{xtilde}.
Let also $\{M_k\}$ be defined as in \eqref{seminorm}. 
 %
 %
Then,
\begin{equation} 
\label{aux.0}
 M_k \left( \begin{array}{c} x_{k-1}-x_{k}\\ y_{k-1}-y_{k}\\ {\gamma}_{k-1}-{\gamma}_{k}
\end{array} \right) \in \left( \begin{array}{c} \partial f(x_k)-A^*\tilde{\gamma}_k\\\partial g(y_k)-B^*\tilde{\gamma}_k\\Ax_k+By_k-b
\end{array} \right)\qquad \forall\,k\geq 1.
\end{equation}
%
\end{proposition}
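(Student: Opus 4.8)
\textbf{Proof plan for Proposition~\ref{pr:aux}.}

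The plan is to extract the desired inclusion directly from the first-order optimality conditions of the subproblems \eqref{def:tsk-admm}, \eqref{def:tyk-admm} and from the update rule \eqref{admm:eqxk}. First I would write the optimality condition for the $x$-subproblem: since $x_k$ minimizes the (proper closed convex) objective in \eqref{def:tsk-admm}, we have $0\in\partial f(x_k)-A^*\gamma_{k-1}+A^*H_k(Ax_k+By_{k-1}-b)+R_k(x_k-x_{k-1})$. Recognizing the combination $\gamma_{k-1}-H_k(Ax_k+By_{k-1}-b)=\tilde\gamma_k$ from the definition \eqref{xtilde}, this rearranges to $R_k(x_{k-1}-x_k)\in\partial f(x_k)-A^*\tilde\gamma_k$, which is exactly the first component of \eqref{aux.0}.

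Next I would treat the $y$-subproblem analogously: optimality of $y_k$ in \eqref{def:tyk-admm} gives $0\in\partial g(y_k)-B^*\gamma_{k-1}+B^*H_k(Ax_k+By_k-b)+S_k(y_k-y_{k-1})$. Here the term involving $H_k$ uses $Ax_k+By_k-b$ (not $Ax_k+By_{k-1}-b$), so I would rewrite $\gamma_{k-1}-H_k(Ax_k+By_k-b)$ in terms of $\tilde\gamma_k$ and the $y$-increment: from \eqref{xtilde}, $\gamma_{k-1}-H_k(Ax_k+By_k-b)=\tilde\gamma_k-H_kB(y_k-y_{k-1})$. Substituting yields $0\in\partial g(y_k)-B^*\tilde\gamma_k+B^*H_kB(y_k-y_{k-1})+S_k(y_k-y_{k-1})$, i.e. $(B^*H_kB+S_k)(y_{k-1}-y_k)\in\partial g(y_k)-B^*\tilde\gamma_k$, matching the second component.

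For the third component I would use the multiplier update \eqref{admm:eqxk}: $\gamma_k=\gamma_{k-1}-\theta H_k(Ax_k+By_k-b)$ gives $\theta^{-1}H_k^{-1}(\gamma_{k-1}-\gamma_k)=Ax_k+By_k-b$, which is the third component exactly (and makes sense since $H_k\in\M_{++}^{\Gamma}$ is invertible). Finally I would assemble the three componentwise inclusions into the single block inclusion \eqref{aux.0}, using the block-diagonal form of $M_k$ in \eqref{seminorm} so that $M_k(x_{k-1}-x_k,\,y_{k-1}-y_k,\,\gamma_{k-1}-\gamma_k)$ has components $R_k(x_{k-1}-x_k)$, $(B^*H_kB+S_k)(y_{k-1}-y_k)$, $\theta^{-1}H_k^{-1}(\gamma_{k-1}-\gamma_k)$. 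The only mild subtlety — the ``hard part,'' though it is really just bookkeeping — is correctly tracking which constraint residual ($By_{k-1}$ versus $By_k$) appears in each subproblem's penalty gradient and absorbing the discrepancy into the $B^*H_kB$ term of the $y$-block; everything else is a direct substitution.
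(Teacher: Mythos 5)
Your proposal is correct and follows essentially the same route as the paper: write the first-order optimality conditions for \eqref{def:tsk-admm} and \eqref{def:tyk-admm}, substitute the definition \eqref{xtilde} of $\tilde\gamma_k$ (absorbing the $H_kB(y_k-y_{k-1})$ discrepancy into the $B^*H_kB$ term), invert the update \eqref{admm:eqxk} for the third component, and assemble via the block-diagonal structure of $M_k$. No gaps.
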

\begin{proof}
From the first order optimality conditions for \eqref{def:tsk-admm}
and \eqref{def:tyk-admm}, we obtain, respectively, 
\begin{align*}
\begin{aligned}
&0\in \partial f(x_k)-A^*\left({\gamma}_{k-1}-H_k(Ax_k+By_{k-1}-b)\right)+{R_k}(x_k-{x}_{k-1}),\\
&0 \in \partial g(y_k)-B^*({\gamma}_{k-1}-H_k( Ax_k+By_k-b))+{S_k}(y_k-{y}_{k-1}),
\end{aligned}
\end{align*}
which, combined with \eqref{xtilde}, yields
\begin{align}
 \label{aux.1}
 & {R_k}(x_{k-1}-x_{k})\in \partial f(x_k)-A^*\tilde{\gamma}_k,\quad  (B^*H_kB+S_k)(y_{k-1}-y_{k})\in \partial g(y_k)-B^*\tilde{\gamma}_k.
\end{align}
On the other hand, \eqref{admm:eqxk} (and the assumption $H_k\in \M_{++}^{\Gamma}$) 
gives 
\begin{align}
 \label{eq:aux.2}
 \theta^{-1}H_k^{-1}(\gamma_{k-1} - \gamma_k)=Ax_k+By_k-b.
\end{align}
Using \eqref{seminorm}, \eqref{aux.1} and \eqref{eq:aux.2}
we obtain \eqref{aux.0}. 
\end{proof}

The next lemma will allow us to use the main results of Section \ref{sec:smhpe} for analyzing
the nonasymptotic convergence of the VM-PADMM.

\begin{lemma}
 \label{lm:222}
The sequence $\{M_k\}_{k\geq 0}$ defined in \eqref{seminorm}, the scalar $C_S$ and the sequence 
$\{c_k\}$ given in \emph{Assumption \ref{ass:b1}} satisfy 
condition \eqref{CS} of \emph{Assumption \ref{ass:a1}}.
\end{lemma}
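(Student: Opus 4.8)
The plan is to verify the two conditions in \eqref{CS} directly from the corresponding conditions \eqref{CS2} in Assumption \ref{ass:b1}, exploiting the block-diagonal structure of $M_k$ in \eqref{seminorm}. The scalar $C_S$ and the sequence $\{c_k\}$ are taken to be exactly those given in Assumption \ref{ass:b1}, so the summability condition $\sum_{i=0}^k c_i \le C_S$ in \eqref{CS} is immediate and nothing needs to be proved there. The real work is the operator inequality $\tfrac{1}{1+c_k}M_k \preceq M_{k+1}\preceq (1+c_k)M_k$ for all $k\ge 0$.

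The key observation is that the partial order $\preceq$ respects block-diagonal direct sums: if $M$ and $N$ are block-diagonal with blocks $(M^{(1)},M^{(2)},M^{(3)})$ and $(N^{(1)},N^{(2)},N^{(3)})$ acting on $\X$, $\Y$, $\Gamma$ respectively, then $M\preceq N$ holds on $\Z$ if and only if $M^{(j)}\preceq N^{(j)}$ for $j=1,2,3$; this follows directly from the definition of $\preceq$ together with the block form of the inner product \eqref{eq:inner.p}. Hence it suffices to establish, for each $k\ge0$, the three scalar-parameter operator inequalities $\tfrac{1}{1+c_k}R_k\preceq R_{k+1}\preceq (1+c_k)R_k$ on $\X$, $\tfrac{1}{1+c_k}(B^*H_kB+S_k)\preceq B^*H_{k+1}B+S_{k+1}\preceq (1+c_k)(B^*H_kB+S_k)$ on $\Y$, and $\tfrac{1}{1+c_k}\theta^{-1}H_k^{-1}\preceq \theta^{-1}H_{k+1}^{-1}\preceq (1+c_k)\theta^{-1}H_k^{-1}$ on $\Gamma$. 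The first is precisely the $j=1$ case of \eqref{CS2}. For the middle block, I would note that $S_k\preceq S_{k+1}\preceq(1+c_k)S_k$ and $H_k\preceq H_{k+1}\preceq(1+c_k)H_k$ (cases $j=2,3$) give $B^*H_kB\preceq B^*H_{k+1}B\preceq (1+c_k)B^*H_kB$ by pre/post-composition with $B^*,B$ (which preserves $\preceq$), and then add these two chains. The third block requires passing from $H_k\preceq H_{k+1}\preceq(1+c_k)H_k$ to the analogous chain for the inverses: since the map $M\mapsto M^{-1}$ on $\M_{++}^{\Gamma}$ is order-reversing, $H_k\preceq H_{k+1}$ gives $H_{k+1}^{-1}\preceq H_k^{-1}$, and $H_{k+1}\preceq(1+c_k)H_k$ gives $\tfrac{1}{1+c_k}H_k^{-1}\preceq H_{k+1}^{-1}$; together with the lower bound $\tfrac{1}{1+c_k}H_k\preceq H_{k+1}$, which similarly yields $H_{k+1}^{-1}\preceq(1+c_k)H_k^{-1}$, one obtains $\tfrac{1}{1+c_k}H_k^{-1}\preceq H_{k+1}^{-1}\preceq(1+c_k)H_k^{-1}$, and multiplying through by the positive constant $\theta^{-1}$ preserves $\preceq$. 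Reassembling the three blocks gives \eqref{CS} for $\{M_k\}$, completing the proof.

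The only subtle point — and the one I would state most carefully — is the order-reversing property of matrix inversion (and that the extreme entries of the chain must be handled separately for lower and upper bounds, using that $1/(1+c_k)$ and $1+c_k$ are positive scalars so that $H_{k+1}\preceq (1+c_k)H_k$ is equivalent to $\tfrac{1}{1+c_k}H_{k+1}\preceq H_k$, etc.); everything else is a routine matter of checking that $\preceq$ is preserved under direct sums, under congruence $M\mapsto B^*MB$, and under multiplication by a positive scalar. I do not anticipate any genuine obstacle; the lemma is essentially a bookkeeping reduction of the hypothesis \eqref{CS2} to the definition \eqref{CS}, and the proof is short.
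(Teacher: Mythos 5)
Your proof is correct and follows essentially the same route as the paper's (which simply invokes the block-diagonal structure of $M_k$ and leaves the details implicit); you usefully make explicit the two points the paper glosses over, namely that $\preceq$ is preserved under the congruence $M\mapsto B^*MB$ and is reversed under inversion on $\M_{++}^{\Gamma}$. One cosmetic slip: in two places you quote the hypothesis as $H_k\preceq H_{k+1}\preceq(1+c_k)H_k$ (and similarly for $S_k$), whereas \eqref{CS2} only gives the lower bound $\tfrac{1}{1+c_k}H_k\preceq H_{k+1}$; this does not affect your argument, since the inequalities you actually use to assemble the final chains are the correct ones.
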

\begin{proof}
Note that the first condition in \eqref{CS2} is identical to
the first one in \eqref{CS}. To finish the proof, note that the
second condition in \eqref{CS2}, which by Assumption \ref{ass:b1} is assumed to hold for 
$\{R_k\}_{k\geq 0}$, $\{S_k\}_{k\geq 0}$ and $\{H_k\}_{k\geq 0}$, 
combined with the (block) diagonal structure of $M_k$ gives 
the second condition in \eqref{CS} for $\{c_k\}_{k\geq 0}$
and $\{M_k\}_{k\geq 0}$.
\end{proof}

The following two technical results will be used to prove that the VM-PADMM is  an instance of the VM-HPE framework.

\begin{lemma}
\label{lem:deltak}
Let  $\{(x_k,y_k,\gamma_k)\}$, $\{S_k\}$ and $\{H_k\}$ be generated by the \emph{VM-PADMM} and let 
$\{\tilde{\gamma}_k\}$ be defined as in \eqref{xtilde}. Let also $d_0$ be defined as
in \eqref{def:d0admm}.
Then, the following hold:\\
\item [\emph{(a)}] for any $k\geq 1$, we have
\[
 \tilde{\gamma}_k-\gamma_k=\frac{1-\theta}{\theta}(\gamma_k-\gamma_{k-1})+H_k B(y_{k}-y_{k-1}),\quad  \tilde{\gamma}_k-\gamma_{k-1}=\frac{1}{\theta}(\gamma_k-\gamma_{k-1})+H_k B(y_{k}-y_{k-1});
\]
\emph{(b)} we have
\[
 \frac{1}{2}\|y_1-y_0\|_{\Y,S_{1}}^2-\frac{1}{\sqrt{\theta}}\langle B(y_{1}-y_{0}),\gamma_1-\gamma_{0} \rangle_\Gamma   \leq  4\max\left\{{1},\frac{\theta}{2-\theta}\right\} d_0^2;
\] 
 \emph{(c)} for any $t>0$ and  $k\geq 2$, we have
 {\small
\[ \frac{2}{\theta} \left\langle{\gamma_k-\gamma_{k-1}-(1-\theta)(\gamma_{k-1}-\gamma_{k-2}),}{B(y_k-y_{k-1})}\right\rangle_{\Gamma}
 \geq \frac{2t-1-c_{k-1}}{t}\|y_k-y_{k-1}\|_{\Y,S_k}^2-t\|y_{k-1}-y_{k-2}\|_{\Y,S_{k-1}}^2.
\]}
\end{lemma}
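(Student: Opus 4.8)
The plan is to treat the three items essentially independently, using only the defining relations of the VM-PADMM — the update \eqref{admm:eqxk} for $\gamma_k$, the definition \eqref{xtilde} of $\tilde\gamma_k$, and the definition \eqref{def:d0admm} of $d_0$ — together with the elementary bilinear inequalities \eqref{eq:545} and \eqref{eq:500}.

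For item (a), I would start from \eqref{xtilde}, namely $\tilde\gamma_k=\gamma_{k-1}-H_k(Ax_k+By_{k-1}-b)$, and rewrite $Ax_k+By_{k-1}-b = (Ax_k+By_k-b) - B(y_k-y_{k-1})$. By the $\gamma$-update \eqref{admm:eqxk}, $H_k(Ax_k+By_k-b) = \theta^{-1}(\gamma_{k-1}-\gamma_k)$, so $\tilde\gamma_k = \gamma_{k-1} - \theta^{-1}(\gamma_{k-1}-\gamma_k) + H_kB(y_k-y_{k-1})$. Subtracting $\gamma_{k-1}$ gives the second identity after regrouping $\gamma_{k-1}-\theta^{-1}(\gamma_{k-1}-\gamma_k) - \gamma_{k-1} = \theta^{-1}(\gamma_k-\gamma_{k-1})$; subtracting $\gamma_k$ instead and using $\gamma_{k-1}-\gamma_k = -(\gamma_k-\gamma_{k-1})$ gives the first. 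This is just bookkeeping.

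For item (b), the idea is to bound each of the two terms separately by $d_0^2$ using that $d_0$ controls $\|y_0-y^*\|_{\Y,(B^*H_0B+S_0)}$ and $\|\gamma_0-\gamma^*\|_{\Gamma,\theta^{-1}H_0^{-1}}$ for some $(x^*,y^*,\gamma^*)\in\Omega^*$, via \eqref{def:d0admm}. Here I expect to need: first, an expression for $y_1-y_0$ and $\gamma_1-\gamma_0$ in terms of the residuals and the optimality conditions at $k=1$ (from Proposition \ref{pr:aux} applied with $k=1$, together with the fact that $(x^*,y^*,\gamma^*)$ satisfies \eqref{eq:fooc}); second, a monotonicity argument pairing $\partial g(y_1)-B^*\tilde\gamma_1$ with $\partial g(y^*)-B^*\gamma^*$ to control $\|y_1-y_0\|_{\Y,S_1}$-type quantities; and third, the relation $Ax_1+By_1-b = \theta^{-1}H_1^{-1}(\gamma_0-\gamma_1)$ from \eqref{eq:aux.2}. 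Then I would use \eqref{eq:545} with the operator $\theta^{-1}H_1^{-1}$ (or $S_1$) and the constant-operator bounds from Assumption \ref{ass:b1} linking $H_1,S_1$ to $H_0,S_0$ with factor $(1+c_0)$ to pass from the $k=1$ operators back to the $k=0$ operators appearing in $d_0$. The constant $4\max\{1,\theta/(2-\theta)\}$ should emerge from combining a factor like $2$ from \eqref{eq:500}, the Young-type splitting, and the $\theta$-dependent coefficients that appear once $\gamma_1-\gamma_0$ is re-expressed; the appearance of $\theta/(2-\theta)$ suggests a step where one completes a square of the form $a^2 - a b$ with $b$ a multiple of $(2-\theta)$, forcing the $\max$.

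For item (c), the plan is to apply the second identity from item (a) at index $k-1$ to rewrite $\gamma_{k-1}-\gamma_{k-2}$, so that the vector $\gamma_k-\gamma_{k-1}-(1-\theta)(\gamma_{k-1}-\gamma_{k-2})$ can be expressed through $B(y_k-y_{k-1})$, $B(y_{k-1}-y_{k-2})$ and $\tilde\gamma$-differences; more directly, I would use item (a) at index $k$ to replace $\theta^{-1}(\gamma_k-\gamma_{k-1})$ by $\tilde\gamma_k-\gamma_k-H_kB(y_k-y_{k-1})$ (rearranged from the first identity), turning the left side into a combination involving $\langle \tilde\gamma_k - \tilde\gamma_{k-1}, B(y_k-y_{k-1})\rangle$ plus an $H_kB$-quadratic term in $y_k-y_{k-1}$. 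The monotonicity of $\partial g$ applied to the inclusions $(B^*H_kB+S_k)(y_{k-1}-y_k)\in\partial g(y_k)-B^*\tilde\gamma_k$ at consecutive indices $k$ and $k-1$ (cf. \eqref{aux.1}) should give $\langle \tilde\gamma_k-\tilde\gamma_{k-1}, B(y_k-y_{k-1})\rangle \geq -\langle \text{(stuff)}, y_k-y_{k-1}\rangle$ with the "stuff" being the $(B^*H_{\cdot}B+S_{\cdot})$-terms; then \eqref{eq:545} applied with $S_k$ and $S_{k-1}$, plus the operator comparison $S_{k-1}\preceq(1+c_{k-1})S_k$ from Assumption \ref{ass:b1}, produces the coefficients $(2t-1-c_{k-1})/t$ and $-t$ after introducing the free parameter $t>0$ in the Young splitting. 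The main obstacle I anticipate is item (b): correctly extracting the $k=1$ identities from the optimality conditions and the KKT system, and then tracking the constants through the Young-type estimate so that exactly $4\max\{1,\theta/(2-\theta)\}$ appears rather than some looser bound — this is where the bookkeeping is least automatic and where one must be careful about which operator norm ($S_1$ versus $\theta^{-1}H_1^{-1}$ versus $B^*H_1B+S_1$) is used at each pairing.
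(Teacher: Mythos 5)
Items (a) and (c) of your proposal are correct and essentially coincide with the paper's argument: (a) is the bookkeeping you describe, and (c) is exactly the paper's route --- the optimality condition of \eqref{def:tyk-admm} rewritten via item (a) as $\frac{1}{\theta}B^*(\gamma_k-(1-\theta)\gamma_{k-1})-S_k(y_k-y_{k-1})\in\partial g(y_k)$, monotonicity of $\partial g$ at indices $k$ and $k-1$, the Young splitting \eqref{eq:545} with parameter $t$, and $S_{k-1}\preceq(1+c_{k-1})S_k$ from Assumption~\ref{ass:b1}.

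Item (b), however, has a genuine gap. The left-hand side of (b) involves the iterate $z_1=(x_1,y_1,\gamma_1)$, while $d_0$ in \eqref{def:d0admm} measures only the distance from $z_0$ to $\Omega^*$. After absorbing the cross term (the paper completes the square $0\le\|\theta^{-1/2}(\gamma_1-\gamma_0)+H_1B(y_1-y_0)\|^2_{\Gamma,H_1^{-1}}$ and then applies \eqref{eq:500}), one is left with $\|z_1-z^*\|^2_{\Z,M_1}+\|z_0-z^*\|^2_{\Z,M_1}$, and the entire difficulty is to prove a quasi-nonexpansiveness bound $\|z_1-z^*\|^2_{\Z,M_1}\le\max\{1,(3\theta-2)/(2-\theta)\}\,\|z_0-z^*\|^2_{\Z,M_1}$. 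The paper obtains this from the three-point identity of Lemma~\ref{lema_desigualdadesB} combined with $r_1=M_1(z_0-z_1)\in T(\tilde z_1)$, $0\in T(z^*)$ and monotonicity of the full operator $T$ in \eqref{FAB}, followed by an explicit computation showing $\|z_0-\tilde z_1\|^2_{\Z,M_1}-\|z_1-\tilde z_1\|^2_{\Z,M_1}\ge\frac{1-\theta}{\theta}\|\gamma_1-\gamma_0\|^2_{\Gamma,\theta^{-1}H_1^{-1}}$ and a case split $\theta\in(0,1]$ versus $\theta\in(1,(\sqrt5+1)/2)$; it is this case split (where the sign of $1-\theta$ flips and the inequality must be rearranged) that produces the factor $\theta/(2-\theta)$, not a completion of a square of the form $a^2-ab$ as you conjecture. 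Your sketch gestures at "a monotonicity argument pairing $\partial f(y_1)$-type terms with the KKT system" to control $\|y_1-y_0\|_{\Y,S_1}$, but never identifies the step that bounds the distance of $z_1$ to $z^*$ by the distance of $z_0$ to $z^*$; without it the bound by $d_0^2$ cannot be reached, and the stated constant cannot be recovered. (You would also need $c_0\le1$ from Assumption~\ref{ass:b1} to pass from $M_1$ to $M_0$ with the factor $2$ that yields the final $4$.)
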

\begin{proof}
(a) This item follows trivially from \eqref{admm:eqxk} and \eqref{xtilde}.

(b) First note that 
\begin{align*}
 \nonumber
 0 &\leq \dfrac{1}{2}\left\|\frac{1}{\sqrt{\theta}}(\gamma_1-\gamma_{0})+ H_1B(y_{1}-y_{0})\right\|_{\Gamma,H^{-1}_1}^2\\
 &= \dfrac{1}{2}\| \gamma_1-\gamma_{0}\|_{\Gamma,\theta^{-1}H_1^{-1}}^2+\frac{1}{\sqrt{\theta}}\langle B(y_{1}-y_{0}),\gamma_1-\gamma_{0} \rangle_\Gamma
 +\dfrac{1}{2}\| B(y_{1}-y_{0})\|_{\Gamma,H_1}^2,
\end{align*}
which combined with the property \eqref{eq:500} yields, for all $z^*:=(x^*,y^*,\gamma^*)\in \Omega^*$,
\begin{align*}
\frac{1}{2}\|y_1-y_0\|_{\Y,S_{1}}^2&-\frac{1}{\sqrt{\theta}}\langle B(y_{1}-y_{0}),\gamma_1-\gamma_{0} \rangle_\Gamma\\ &\leq 
\frac{1}{2}\left(\|y_1-y_0\|_{\Y,S_{1}}^2+ \|\gamma_1-\gamma_{0}\|_{\Gamma,\theta^{-1}H_1^{-1}}^2
 +\| B(y_{1}-y_{0})\|_{\Gamma,H_1}^2\right)\\
 &\leq
\|y_1-y^*\|_{\Y,S_{1}}^2+
\|y_0-y^*\|_{\Y,S_{1}}^2+
\|\gamma_1-\gamma^*\|_{\Gamma,\theta^{-1}H_1^{-1}}^2\\
&+\| \gamma_0-\gamma^*\|_{\Gamma,\theta^{-1}H_1^{-1}}^2+
\| B(y_{1}-y^*)\|_{\Gamma,H_1}^2 +
\| B(y_{0}-y^*)\|_{\Gamma,H_1}^2.
\end{align*}
Direct use of the above inequality and \eqref{seminorm}
yields
\begin{equation}
 \label{eq_000000012}
 \frac{1}{2}\|y_1-y_0\|_{\Y,S_{1}}^2-\frac{1}{\sqrt{\theta}}\langle B(y_{1}-y_{0}),\gamma_1-\gamma_{0} \rangle_\Gamma\leq  
 \|z_1-z^*\|^2_{\Z,M_1} +\|z_0-z^*\|^2_{\Z,M_1},
\end{equation}
where $z_0:=(x_0,y_0,\gamma_0)$ and $z_1:=(x_1,y_1,\gamma_1)$. On the other hand, from Proposition \ref{pr:aux}  and \eqref{seminorm} with $k=1$, we have $r_1:=M_1(z_0-z_1)\in T(\tilde z_1)$, where $T$
is given in \eqref{FAB}. Using this fact, \eqref{eq:ext.z}
and the monotonicity of $T$, we obtain $\langle \tilde z_1-z^*, r_1\rangle\geq 0$ for all $z^*=(x^*,y^*,z^*)\in \Omega^*$.
Hence, from the latter inequality, Lemma~\ref{lema_desigualdadesB} with $(z,z_+,\tilde z)=(z_0,z_1,\tilde z_1)$ and $M=M_1$,  we have, for all $z^*=(x^*,y^*,z^*)\in \Omega^*$,
\begin{equation}
 \label{eq_0000000123}
\|z^*-z_0\|_{\Z,M_1}^2\geq \|z^*-z_1\|_{\Z,M_1}^2+\|z_0-\tilde z_1\|_{\Z,M_1}^2-\|z_1-\tilde z_1\|_{\Z,M_1}^2.
\end{equation}
Note now that letting $\tilde z_1:=(x_1,y_1,\tilde \gamma_1)$, it follows from 
\eqref{seminorm}, item  (a) and some direct calculations that
\begin{align} \label{eq:deltakx}
\|z_1&-\tilde z_1\|^2_{\Z,M_1}=\|\gamma_1-\tilde \gamma_1\|^2_{\Gamma,\theta^{-1}H_1^{-1}}=\left\|\frac{1-\theta}{\theta}(\gamma_1-\gamma_{0})+ H_1B(y_{1}-y_{0})\right\|_{\Gamma,\theta^{-1}H^{-1}_1}^2\nonumber\\
 &= \frac{(1-\theta)^2}{\theta^2}\| \gamma_1-\gamma_{0}\|_{\Gamma,\theta^{-1}H_1^{-1}}^2+\frac{2(1-\theta)}{\theta^2}\langle B(y_{1}-y_{0}),\gamma_1-\gamma_{0} \rangle_\Gamma
 +\frac{1}{\theta}\| B(y_{1}-y_{0})\|_{\Gamma,H_1}^2.
\end{align}
Moreover, using \eqref{seminorm} with $k=1$ and item  (a),
we find
\begin{align}
 \label{eq:deltaky}
 \nonumber
 \|z_0-\tilde z_1\|^2_{\Z,M_1}&=\|x_0-x_1\|^2_{\X,R_1}+\|y_0-y_1\|^2_{\Y,(B^*H_1B+S_1)}+\|\gamma_0-\tilde \gamma_1\|^2_{\Gamma,\theta^{-1}H_1^{-1}}\\
  &\geq \|B(y_1-y_0)\|^2_{\Gamma, H_1}+\left\|\frac{1}{\theta}(\gamma_1-\gamma_{0})+H_k B(y_{1}-y_{0})\right\|^2_{\Gamma,\theta^{-1}H_1^{-1}}\nonumber\\
  &\geq \frac{1+\theta}{\theta}\|B(y_1-y_0)\|^2_{\Gamma, H_1}+\frac{1}{\theta^2}\left\|\gamma_1-\gamma_{0}\right\|^2_{\Gamma,\theta^{-1}H_1^{-1}}+\frac{2}{\theta^2}\langle B(y_{1}-y_{0}),\gamma_1-\gamma_{0} \rangle_\Gamma.
\end{align}
Combining the previous two estimates, we obtain
\begin{align}
 \|z_0-\tilde z_1\|^2_{\Z,M_1}&-\|z_1-\tilde z_1\|^2_{\Z,M_1}\nonumber\\
 &\geq \frac{2-\theta}{\theta}\|\gamma_1- \gamma_0\|^2_{\Gamma,\theta^{-1}H_1^{-1}}+
 \frac{2}{\theta}\langle B(y_{1}-y_{0}),\gamma_1-\gamma_{0} \rangle_\Gamma+\| B(y_{1}-y_{0})\|_{\Gamma,H_1}^2\nonumber\\
 &=\frac{1-\theta}{\theta}\|\gamma_1- \gamma_0\|^2_{\Gamma,\theta^{-1}H_1^{-1}}+\left\|\frac{H_1^{-1/2}(\gamma_1- \gamma_0)}{\theta}+ H_1^{1/2}B(y_{1}-y_{0})\right\|^2_{\Gamma}\nonumber\\
 &\geq\frac{1-\theta}{\theta}\|\gamma_1- \gamma_0\|^2_{\Gamma,\theta^{-1}H_1^{-1}}\nonumber.
 \end{align}
 If $\theta\in (0,1]$, then the last inequality implies that 
\begin{equation}\label{ineq_s23}
 \|z_1-\tilde z_1\|^2_{\Z,M_1}\leq \|z_0-\tilde z_1\|^2_{\Z,M_1}.
\end{equation}
Now, if $\theta\in (1,(\sqrt{5}+1)/2)$, we have
\begin{align*}
 \|z_1-\tilde z_1\|^2_{\Z,M_1}- \|z_0-\tilde z_1\|^2_{\Z,M_1}&\leq\frac{\theta-1}{\theta}\|\gamma_1- \gamma_0\|^2_{\Gamma,\theta^{-1}H_1^{-1}}\\
 &\leq \frac{2(\theta-1)}{\theta}\left(\|\gamma_1- \gamma^*\|^2_{\Gamma,\theta^{-1}H_1^{-1}}+\|\gamma_0-\gamma^*\|^2_{\Gamma,\theta^{-1}H_1^{-1}}\right)\\
&\leq \frac{2(\theta-1)}{\theta} \left[\|z_0-{z}^*\|_{\Z,M_1}^2+\|z_1-{z}^*\|_{\Z,M_1}^2\right]
\end{align*}
where   the second inequality is due to property    \eqref{eq:500}, and the last inequality is due to \eqref{seminorm} and definitions of $z_0,z_1$ and ${z}^*$.
Hence, combining the last estimative with \eqref{eq_0000000123},   we obtain 
$$
\|z_1-{z}^*\|_{\Z,M_1}^2\leq \frac{\theta}{2-\theta}\left(1+\frac{2(\theta-1)}{\theta}\right)\|z_0-{z}^*\|_{\Z,M_1}^2=\frac{3\theta-2}{2-\theta}\|z_0-{z}^* \|_{\Z,M_1}^2.
$$ 
Thus, it follows from   \eqref{eq_0000000123}, \eqref{ineq_s23} and the last inequality that 
\begin{equation}\label{eq:457}
\|z_1-{z}^*\|_{\Z,M_1}^2\leq \max\left\{1,\frac{3\theta-2}{2-\theta}\right\}\|z_0-{z}^*\|_{\Z,M_1}^2.
\end{equation}
Since,  $M_1\preceq (1+c_0)M_0\preceq 2M_0$ (see Assumption \ref{ass:b1} and Lemma \ref{lm:222}), the desired inequality follows  from  \eqref{eq_000000012} and \eqref{eq:457},
and definition of $d_0$ in \eqref{def:d0admm}.

(c) Using the first order optimality condition for \eqref{def:tyk-admm}, \eqref{xtilde}
and item (a), we find, for every $k\geq 1$, 
\begin{align*}
 \partial g(y_k)\ni B^*( \tilde\gamma_k-H_kB(y_k-y_{k-1}))-S_k(y_k-y_{k-1})=\frac{1}{\theta}B^*( \gamma_k-(1-\theta)\gamma_{k-1})-S_k(y_k-y_{k-1}). 
\end{align*}
For any $k\geq 2$, using the above inclusion with $k \leftarrow k$ and $k \leftarrow k-1$, the monotonicity of $\partial g$ 
and the property \eqref{eq:545}, we find
%
\begin{align*}
&\frac{1}{\theta} \left\langle{B^*(\gamma_k-\gamma_{k-1})-(1-\theta)B^*(\gamma_{k-1}-\gamma_{k-2})},{y_k-y_{k-1}}\right\rangle_{\Y}\\ 
&\geq \inner{S_k(y_k-y_{k-1})}{y_k-y_{k-1}}_\Y-\inner{S_{k-1}(y_{k-1}-y_{k-2})}{y_k-y_{k-1}}_\Y\\
 &\geq \|y_k-y_{k-1}\|_{\Y,S_k}^2-\dfrac{1}{2t}\|y_{k-1}-y_{k-2}\|_{\Y,S_{k-1}}^2-\dfrac{t}{2}\|y_k-y_{k-1}\|_{\Y,S_{k-1}}^2,\\
 &\geq \left(1-\frac{1+c_{k-1}}{2t}\right)\|y_k-y_{k-1}\|_{\Y,S_k}^2-\dfrac{t}{2}\|y_{k-1}-y_{k-2}\|_{\Y,S_{k-1}}^2,
\end{align*}
where the last inequality is due to  Proposition~\ref{pr:dns} and Assumption \ref{ass:b1}, and so the proof of the lemma follows.
\end{proof}

\begin{lemma}\label{pro:sigma} 
For every $\theta \in (0,(\sqrt{5}+1)/2)$, there exists a parameter $\sigma_\theta\in (0,1)$ such that,  for all $\sigma\in[\sigma_\theta,1)$, 
the matrix \begin{equation*} 
M_\theta(\sigma)= \left[
\begin{array}{cc} 
 \sigma(1+\theta)-1& (\sigma+\theta-1)(1-\theta)\\[2mm]
(\sigma+\theta-1)(1-\theta)&  \sigma-(1-\theta)^2\\
\end{array} \right]
\end{equation*}
is  symmetric  positive definite, and 
\begin{equation}\label{eq:sigm}
\max\{(1-\theta)^2,1-\theta, {1}/{(1+\theta)}\}< \sigma, \quad \frac{(\sigma+\theta-1)\left({4-2\sqrt{2}}\right)}{\sqrt{2}\theta}< \sigma.
\end{equation}
\end{lemma}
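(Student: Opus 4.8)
\textbf{Plan for proving Lemma~\ref{pro:sigma}.} The statement is a purely elementary claim about a $2\times 2$ symmetric matrix $M_\theta(\sigma)$ and two scalar inequalities, all to be satisfied for $\sigma$ close enough to $1$, given $\theta\in(0,(\sqrt5+1)/2)$. The natural strategy is to analyze everything ``in the limit $\sigma\to 1^-$'' and then invoke continuity/openness. So the first step is to evaluate all the relevant quantities at $\sigma=1$: the diagonal entries become $1+\theta-1=\theta>0$ and $1-(1-\theta)^2=\theta(2-\theta)$, which is strictly positive precisely because $\theta<2$ (indeed $(\sqrt5+1)/2<2$); and the determinant at $\sigma=1$ is $\det M_\theta(1)=\theta\cdot\theta(2-\theta)-(\theta)^2(1-\theta)^2=\theta^2\bigl[(2-\theta)-(1-\theta)^2\bigr]=\theta^2(1+\theta-\theta^2)$, using the off-diagonal entry $(\sigma+\theta-1)(1-\theta)=\theta(1-\theta)$ at $\sigma=1$. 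The factor $1+\theta-\theta^2$ is strictly positive exactly when $\theta<(\sqrt5+1)/2$, which is the golden-ratio hypothesis on $\theta$. Hence at $\sigma=1$ the trace and determinant are both strictly positive, so $M_\theta(1)$ is positive definite.

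Next I would check the two scalar conditions in \eqref{eq:sigm} at $\sigma=1$: we need $\max\{(1-\theta)^2,1-\theta,1/(1+\theta)\}<1$, which holds since $(1-\theta)^2<1$ and $1-\theta<1$ for $\theta>0$, and $1/(1+\theta)<1$ for $\theta>0$; and the second condition at $\sigma=1$ reads $\theta(4-2\sqrt2)/(\sqrt2\,\theta)=(4-2\sqrt2)/\sqrt2=2\sqrt2-2<1$, which is true since $2\sqrt2\approx 2.828<3$. So all four strict inequalities (positive definiteness being two: trace $>0$ and $\det>0$; plus the two displayed inequalities) hold strictly at $\sigma=1$. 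Since each of these is a strict inequality between continuous functions of $\sigma$ (polynomials, or quotients with nonvanishing denominators near $\sigma=1$), there is an open neighborhood of $\sigma=1$ on which they all continue to hold. Intersecting that neighborhood with $[0,1)$ and taking $\sigma_\theta$ to be its left endpoint (or any value in it) gives the desired threshold: for all $\sigma\in[\sigma_\theta,1)$ the conclusions hold. I should be slightly careful to state positive definiteness via the Sylvester criterion (leading principal minor $\sigma(1+\theta)-1>0$ and $\det M_\theta(\sigma)>0$), both of which I have just verified are strictly positive at $\sigma=1$ and hence on a neighborhood.

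The only genuine content is the determinant computation and recognizing that $1+\theta-\theta^2>0\iff\theta<(1+\sqrt5)/2$; everything else is bookkeeping. I expect the main (minor) obstacle to be simply organizing the continuity argument cleanly: one wants to avoid defining $\sigma_\theta$ in a way that could accidentally equal $1$ or fail to be attainable, so I would phrase it as ``there exists $\sigma_\theta\in(0,1)$ such that for all $\sigma\in[\sigma_\theta,1)$ \dots,'' obtained by noting that the finitely many strict inequalities define an open subset of $\R$ containing $1$, hence containing an interval $(1-\delta,1]$ for some $\delta>0$, and then setting $\sigma_\theta:=\max\{1-\delta/2,\,(1-\theta)^2,\,1-\theta,\,1/(1+\theta),\,1/(1+\theta)\}$ capped below $1$ — or more simply just $\sigma_\theta:=1-\delta/2$ after first enlarging the defining inequality set to include $\sigma>\max\{(1-\theta)^2,1-\theta,1/(1+\theta)\}$ so that the left inequality in \eqref{eq:sigm} is automatic. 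A remark worth including is why the hypothesis $\theta<(\sqrt5+1)/2$ is exactly the sharp one: it is forced by $\det M_\theta(1)>0$, i.e.\ by $1+\theta-\theta^2>0$.
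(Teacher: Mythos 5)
Your proposal is correct and is essentially the paper's own argument: the authors likewise observe that at $\sigma=1$ the matrix $M_\theta(1)$ is positive definite and both inequalities in \eqref{eq:sigm} hold strictly for every $\theta\in(0,(\sqrt5+1)/2)$, and then conclude by continuity that the same is true for all $\sigma$ in some interval $[\sigma_\theta,1)$. Your determinant computation $\det M_\theta(1)=\theta^2(1+\theta-\theta^2)$, and the observation that its positivity is exactly the golden-ratio bound on $\theta$, correctly supplies the detail the paper leaves implicit.
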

\begin{proof}
Since  $M_\theta(\sigma)$ is symmetric,  the proof is immediate by noting that for $\sigma=1$ and for every $\theta \in (0,(\sqrt{5}+1)/2)$,  $M_\theta(\sigma)$ is definite positive and  \eqref{eq:sigm} trivially holds.
\end{proof}

Next we show that the VM-PADMM can be regarded as an instance of the VM-HPE framework.
\begin{proposition}
\label{maincorADMM} 
Let $\{(x_k,y_k,\gamma_k)\}$  be generated by the \emph{VM-PADMM} and let  
$\{\tilde{\gamma}_k\}$ and $\{M_k\}$  be defined as in~\eqref{xtilde} and \eqref{seminorm}, respectively. Let also $d_0$,  $T$,  $\sigma_\theta$ and $\tau_\theta$  be
as in \eqref{def:d0admm}, \eqref{FAB},  Lemma~\ref{pro:sigma}, and  Theorem~\ref{th:maintheoADMM},  respectively.
Define $z_0:=(x_0,y_0,\gamma_0)$, $\eta_0:=\tau_{\theta}d_0^2$ and, for all $k\geq 1$,
\begin{align}
 \label{i645}
  z_k&:=(x_k,y_k,\gamma_k), \quad  
	\tilde z_k:=(x_k,y_k,\tilde \gamma_k), \quad   r_k:=M_k(z_{k-1}-z_k),\\[3mm]
	\eta_k&:= \frac{{\sigma_\theta-(\theta-1)^2}}{\theta^2}\|\gamma_k-\gamma_{k-1}\|_{\Gamma,\theta^{-1}H_k^{-1}}^2+
\frac{\sqrt{2}(\sigma_\theta+\theta-1)}{\theta}\|y_k-y_{k-1}\|_{\Y,S_k}^2.\label{etak}
\end{align}
Then, for all $k\geq 1$,
\begin{align}
 \label{i646}
 \begin{aligned}
 & r_k\in T(\tilde z_k),\\
 & \|z_k-\tilde z_k\|^2_{\Z,M_k}+\eta_k\leq \sigma_\theta\|z_{k-1}-\tilde z_k\|^2_{\Z,M_k}+\eta_{k-1}.
\end{aligned}
\end{align} 
As a consequence, the \emph{VM-PADMM} falls within the \emph{VM-HPE} framework (with input $z_0$, $\eta_0$ and $\sigma=\sigma_\theta$) for solving
\eqref{eq:inc.p} with $T$ as in \eqref{FAB}.
\end{proposition}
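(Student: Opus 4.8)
The plan is to verify the two displayed relations in \eqref{i646} and then invoke the definitions. The first inclusion $r_k\in T(\tilde z_k)$ is immediate: by Proposition~\ref{pr:aux} we already have $M_k(z_{k-1}-z_k)\in T(\tilde z_k)$ with $T$ as in \eqref{FAB}, and $r_k$ is defined to be exactly this vector in \eqref{i645}. So the entire work is concentrated on the error inequality, the second line of \eqref{i646}, namely
\[
\|z_k-\tilde z_k\|^2_{\Z,M_k}+\eta_k\leq \sigma_\theta\|z_{k-1}-\tilde z_k\|^2_{\Z,M_k}+\eta_{k-1}.
\]
First I would compute both squared seminorms explicitly using \eqref{seminorm} and the fact that $z_k$ and $\tilde z_k$ differ only in the last ($\Gamma$) coordinate, where $\gamma_k$ is replaced by $\tilde\gamma_k$. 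Using Lemma~\ref{lem:deltak}(a) to express $\tilde\gamma_k-\gamma_k$ and $\tilde\gamma_k-\gamma_{k-1}$ in terms of $\gamma_k-\gamma_{k-1}$ and $H_kB(y_k-y_{k-1})$, I would obtain (as in the computation \eqref{eq:deltakx} for $k=1$, but now for general $k$) that $\|z_k-\tilde z_k\|^2_{\Z,M_k}$ equals a quadratic form in the two quantities $\|\gamma_k-\gamma_{k-1}\|_{\Gamma,\theta^{-1}H_k^{-1}}$ and $\|B(y_k-y_{k-1})\|_{\Gamma,H_k}$ together with the cross term $\langle B(y_k-y_{k-1}),\gamma_k-\gamma_{k-1}\rangle_\Gamma$, and similarly $\sigma_\theta\|z_{k-1}-\tilde z_k\|^2_{\Z,M_k}$ contributes $\sigma_\theta$ times $\|x_{k-1}-x_k\|^2_{\X,R_k}+\|y_{k-1}-y_k\|^2_{\Y,(B^*H_kB+S_k)}$ plus a quadratic form in the same $\Gamma$-quantities.

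Next I would move everything to one side. The $R_k$-term appears only on the right-hand side with coefficient $\sigma_\theta\ge 0$, so it can be dropped. The remaining inequality, after subtracting, should reduce to showing that a certain quadratic form in the vector $(\,\|\gamma_k-\gamma_{k-1}\|_{\Gamma,\theta^{-1}H_k^{-1}},\,\|B(y_k-y_{k-1})\|_{\Gamma,H_k}\,)$ — whose coefficient matrix is (a scalar multiple of) the matrix $M_\theta(\sigma_\theta)$ from Lemma~\ref{pro:sigma} — is nonnegative, \emph{plus} the contribution of the $S_k$-terms, $\eta_k$, and $\eta_{k-1}$. Here the definition of $\eta_k$ in \eqref{etak} is designed precisely so that the ``diagonal'' pieces $\frac{\sigma_\theta-(\theta-1)^2}{\theta^2}\|\gamma_k-\gamma_{k-1}\|^2$ and $\frac{\sqrt2(\sigma_\theta+\theta-1)}{\theta}\|y_k-y_{k-1}\|^2_{\Y,S_k}$ absorb the leftover terms, while the telescoping structure $+\eta_k\leq\cdots+\eta_{k-1}$ lets me use Lemma~\ref{lem:deltak}(c) to control the cross term $\frac{2}{\theta}\langle\gamma_k-\gamma_{k-1}-(1-\theta)(\gamma_{k-1}-\gamma_{k-2}),B(y_k-y_{k-1})\rangle_\Gamma$ by the $S_k$- and $S_{k-1}$-norms (with a free parameter $t$, which I would choose, e.g., $t=1/\sqrt2$ or the value making the coefficients match the $\sqrt2$ in \eqref{etak}). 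The second inequality in \eqref{eq:sigm} is exactly what guarantees that the residual $S$-quadratic form has the right sign after this substitution; the positive-definiteness of $M_\theta(\sigma_\theta)$ and the first inequality in \eqref{eq:sigm} handle the $\Gamma$-part. I would treat the base case $k=1$ separately (there is no $y_{k-2}$), using Lemma~\ref{lem:deltak}(b) together with the choice $\eta_0=\tau_\theta d_0^2$ and the value of $\tau_\theta$ from Theorem~\ref{th:maintheoADMM} to verify $\|z_1-\tilde z_1\|^2_{\Z,M_1}+\eta_1\leq\sigma_\theta\|z_0-\tilde z_1\|^2_{\Z,M_1}+\eta_0$; note $\tau_\theta$ is exactly of the form $(8(\sigma_\theta+\theta-1)\max\{1,\theta/(2-\theta)\})/\sqrt{\theta^3}$, which matches the bound $4\max\{1,\theta/(2-\theta)\}d_0^2$ from Lemma~\ref{lem:deltak}(b) scaled by the $\sqrt2(\sigma_\theta+\theta-1)/\theta$-type constants appearing in $\eta_1$.

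Once both relations in \eqref{i646} are established for all $k\ge1$, the final assertion is essentially a matter of matching notation: with $z_0$, $\eta_0\in\R_+$ and $\sigma=\sigma_\theta\in[0,1)$ as given, and with $M_k\in\M^{\Z}_+$ by construction in \eqref{seminorm}, the triples $(z_k,\tilde z_k,\eta_k)$ defined in \eqref{i645}--\eqref{etak} satisfy exactly \eqref{breg-subpro} and \eqref{breg-cond1}. Hence the VM-PADMM generates a sequence that is a valid execution of the VM-HPE framework with these inputs, which is the claim. The main obstacle I anticipate is the general-$k$ algebraic identity for $\|z_k-\tilde z_k\|^2_{\Z,M_k}-\sigma_\theta\|z_{k-1}-\tilde z_k\|^2_{\Z,M_k}$ and organizing the leftover terms so that the matrix $M_\theta(\sigma_\theta)$ emerges cleanly and the $S_k$-cross-term is dominated via Lemma~\ref{lem:deltak}(c) with a well-chosen $t$; the rest is bookkeeping plus the sign conditions \eqref{eq:sigm} supplied by Lemma~\ref{pro:sigma}.
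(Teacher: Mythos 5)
Your proposal follows essentially the same route as the paper's proof: the inclusion via Proposition~\ref{pr:aux}, the explicit expansion of both seminorms through Lemma~\ref{lem:deltak}(a) so that the matrix $M_\theta(\sigma_\theta)$ governs the $\Gamma$-part, the telescoping role of $\eta_k$ combined with Lemma~\ref{lem:deltak}(c) (the paper takes $t=\sqrt2$, which is indeed the value matching the $\sqrt2$ coefficient in \eqref{etak}), the conditions \eqref{eq:sigm} for the sign of the leftover $S_k$-terms, and the separate base case $k=1$ via Lemma~\ref{lem:deltak}(b) and $\eta_0=\tau_\theta d_0^2$. The plan is correct and complete in its identification of the needed ingredients; only the bookkeeping remains.
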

\begin{proof}
First note that the inclusion in \eqref{i646} follows
from \eqref{FAB}, \eqref{aux.0} and the definitions of $z_k$, $\tilde z_k$ and $r_k$ in \eqref{i645}.
Now, using \eqref{eq:inner.p}, \eqref{seminorm}, \eqref{i645}
and some direct calculations, we obtain
\begin{align}
 \label{eq:1001}
\nonumber
 \|z_{k-1}-\tilde z_k\|_{\Z,M_k}^2&= \|x_{k-1}-x_k\|_{\X,R_k}^2+\|B(y_{k-1}-y_k)\|_{\Gamma,H_k}^2+\|y_{k-1}-y_k\|_{\Y,S_k}^2\\
 &+\|\gamma_{k-1}-\tilde \gamma_k\|_{\Gamma,\theta^{-1}H^{-1}_k}^2.
\end{align}
Using the same reasoning and Lemma~\ref{lem:deltak}(a), we also find
\begin{align}
 \label{eq:1002}
 \|z_k-\tilde z_k\|^2_{\Z,M_k}&=\|\gamma_k-\tilde \gamma_k\|^2_{\Gamma,\theta^{-1}H_k^{-1}}=\left\|\frac{1-\theta}{\theta}(\gamma_k-\gamma_{k-1})+H_k B(y_{k}-y_{k-1})\right\|_{\Gamma,\theta^{-1}H_k^{-1}}^2.
\end{align} 
Hence, from Lemma \ref{lem:deltak}(a) 
and some algebraic manipulations, we obtain
\begin{align*}
\nonumber
 \sigma_{\theta}\|\gamma_{k-1}-\tilde \gamma_k\|^2_{\Gamma,\theta^{-1} H_k^{-1}}&-\|\gamma_k-\tilde \gamma_k\|^2_{\Gamma,\theta^{-1} H_k^{-1}}=
\sigma_\theta\left\|\frac{1}{\theta}(\gamma_k-\gamma_{k-1})+H_k B(y_{k}-y_{k-1})\right\|^2_{\Gamma, \theta^{-1}H_k^{-1}}\\
& -\left\|\frac{1-\theta}{\theta}(\gamma_k-\gamma_{k-1})+H_k B(y_{k}-y_{k-1})\right\|_{\Gamma,\theta^{-1}H_k^{-1}}^2\\
\nonumber
 &=\frac{\sigma_\theta-(1-\theta)^2}{\theta^2}\|\gamma_{k}-\gamma_{k-1}\|^2_{\Gamma, \theta^{-1}H_k^{-1}}+\frac{\sigma_\theta-1}{\theta}\|B(y_{k}-y_{k-1})\|_{\Gamma,H_k}^2\nonumber\\
 &+\frac{2(\sigma_\theta+\theta-1)}{\theta^2}\inner{\gamma_{k}-\gamma_{k-1}}{B(y_k-y_{k-1})}_{\Gamma},
\end{align*}
which in turn, combined with \eqref{eq:1001} and \eqref{eq:1002}, yields
\begin{align}
 \label{eq:di23}
 \nonumber
\sigma_\theta\|{z_{k-1}} -\tz_{k}\|_{\Z,M_k}^2&- \|{z_k}- {\tz}_k\|_{\Z,M_k}^2= 
\sigma_\theta \|x_{k}-x_{k-1}\|_{\X,R_k}^2 +\sigma_\theta\| y_{k}-y_{k-1}\|_{\Y,S_k}^2\\
&+\frac{\sigma_\theta-(1-\theta)^2}{\theta^2}\|\gamma_{k}-\gamma_{k-1}\|^2_{\Gamma, \theta^{-1}H_k^{-1}}+\frac{\sigma_\theta(\theta+1)-1}{\theta}\|B(y_{k}-y_{k-1})\|_{\Gamma,H_k}^2\nonumber\\
&+\frac{2(\sigma_\theta+\theta-1)}{\theta^2}\inner{\gamma_{k}-\gamma_{k-1}}{B(y_k-y_{k-1})}_{\Gamma},
\end{align}
We will now consider two cases: $k=1$ and $k>1$. In the first case, it follows from
 \eqref{eq:di23} with $k=1$, Lemma~\ref{lem:deltak}(b),  the first inequality in \eqref{eq:sigm} with $\sigma=\sigma_\theta$, and definitions of $\eta_0$ and $\eta_1$ that
\begin{align*}
\sigma_\theta\|{z_{0}} -\tz_{1}\|_{\Z, M_1}^2- \|{z_1}- {\tz}_1\|_{\Z,M_1}^2+\eta_0-\eta_1
&\geq \left[\sigma_\theta-\frac{\sqrt{2}(\sigma_\theta+\theta-1)}{\theta}  +\frac{\sigma_\theta +\theta-1}{\theta^{3/2}} \right]\|y_1-y_0\|_{\Y,S_{1}}^2,\\
&\geq \left[\sigma_\theta+\frac{(\sigma_\theta+\theta-1)\left({2-3\sqrt{2}}\right)}{3\theta}\right]\|y_1-y_0\|_{\Y,S_{1}}^2,
 \end{align*}
where the last inequality is due to $\sqrt{\theta}\leq 3/2$. Hence, since $({2-3\sqrt{2}})/3\geq(2\sqrt{2}-4)/{\sqrt{2}}$,
inequality~\eqref{i646}  for $k=1$ now follows from  the second inequality in \eqref{eq:sigm} with $\sigma=\sigma_\theta$.
On the other hand, assuming $k>1$, from inequality \eqref{eq:di23}, 
Lemma \ref{lem:deltak}(c) with $t=\sqrt{2}$,  the first inequality in \eqref{eq:sigm} with $\sigma=\sigma_\theta$, and   definition of $\{\eta_k\}$ in \eqref{etak}, we have
\begin{align*}
 &\sigma_\theta\|{z_{k-1}} -\tz_{k}\|_{\Z,M_k}^2- \|{z_k}- {\tz}_k\|_{\Z,M_k}^2 +\eta_{k-1}-\eta_k\geq\frac{\sigma_\theta(\theta+1)-1}{\theta}\|B(y_{k}-y_{k-1})\|_{\Gamma,H_k}^2 \\
 &+\frac{\sigma_\theta-(1-\theta)^2}{\theta^2}\|\gamma_{k-1}-\gamma_{k-2}\|^2_{\Gamma, \theta^{-1}H_{k-1}^{-1}}+\frac{2(\sigma_\theta+\theta-1)(1-\theta)}{\theta^2}\inner{\gamma_{k-1}-\gamma_{k-2}}{B(y_k-y_{k-1})}_{\Gamma}\\
 &+ \left[\frac{(\sigma_\theta+\theta-1)\left({2\sqrt{2}-4+1-c_{k-1}}\right)}{\sqrt{2}\theta}+\sigma_\theta\right]\|y_k-y_{k-1}\|_{\Y,S_k}^2.
 \end{align*}
 Since $c_{k-1}\leq 1$ (see Assumption~\ref{ass:b1}), we obtain from  \eqref{eq:sigm} with $\sigma=\sigma_\theta$   that the term inside  bracket is nonnegative. Hence, inequality \eqref{i646} for $k>1$ now follows  from the first statement of Lemma~\ref{pro:sigma}.

 The last statement of the proposition follows
directly from \eqref{i646} and VM-HPE framework's definition.
\end{proof}

We are now ready to prove Theorems \ref{th:maintheoADMM} and  \ref{th:ergodicPADMM}.

\noindent
{\bf Proof of Theorem \ref{th:maintheoADMM}:} Using Proposition \ref{maincorADMM} and Theorem \ref{th:alpha}, we conclude that, for every 
$k\geq 1$, 
 \eqref{eq:main.pc.admm} holds and  there exists $i\leq k$ such that
  \begin{align}
 \label{eq:main.pc.admm0223}
\|M_i(z_{i-1}-z_i)\|_{\Z,M_i}^*\leq 
	\frac{{d_0}}{\sqrt{k}} \sqrt{ \frac{2(1+\sigma_{\theta})C_P(1+\tau_{\theta})+2(1-\sigma_{\theta})\tau_{\theta}}{(1-\sigma_{\theta})}},
\end{align}
 where $\{M_k\}$ and $\{z_k\}$ are defined in \eqref{seminorm} and \eqref{i645}, respectively. Hence, using Proposition \ref{pr:dn}, we obtain  
   \begin{align}
 \label{eq:main.pc.admm021}
 \nonumber
\|M_i(z_{i-1}-z_i)\|_{\Z,M_i}^*&=\|z_{i-1}-z_i\|_{\Z,M_i}\\
&=\left({\|x_{i-1}-x_i\|_{\X,R_i}^2+\|y_{i-1}-y_i\|_{\mathcal{Y},(B^*H_iB+S_i)}^2+\|\gamma_{i-1}-\gamma_i\|_{\Gamma,\theta^{-1}{H_i}^{-1}}^2}\right)^{1/2}.
\end{align}
 On the other hand, using  Proposition \ref{pr:dn}  and the definition in  \eqref{eq:main.pc.admm}, we find  
 \begin{align*}
 \begin{aligned}
 &\|x_{i-1}-x_i\|_{\X,R_i}=\|R_i(x_{i-1}-x_i)\|^*_{\X,R_i}=\|r_{i,x}\|^*_{\X,R_i},\\
 &\|y_{i-1}-y_i\|_{\mathcal{Y},(B^*H_iB+S_i)}=\|(B^*H_iB+S_i)(y_{i-1}-y_i)\|^*_{\mathcal{Y},(B^*H_iB+S_i)}=\|r_{i,y}\|^*_{\Y,(B^*H_iB+S_i)},\\
 &\|\gamma_{i-1}-\gamma_i\|_{\Gamma,\theta^{-1}H_i^{-1}}=\|\theta^{-1}H_i^{-1}(\gamma_{i-1}-\gamma_i)\|^*_{\Gamma,\theta^{-1}H_i^{-1}}=\|r_{i,\gamma}\|^*_{\Gamma,\theta^{-1}H_i^{-1}},
\end{aligned}
\end{align*}
 which, combined with \eqref{eq:main.pc.admm0223} and \eqref{eq:main.pc.admm021}, proves  \eqref{eq:main.pc.admm02}.
\qed

\vgap
\noindent
{\bf Proof of Theorem \ref{th:ergodicPADMM}:} 
Combining  Proposition \ref{maincorADMM} and Theorem \ref{th:vhpe.eg}, and taking into account that 
$r_k^a=(r_{k,\,x}^a,r_{k,\,y}^a, r_{k,\,\gamma}^a)$, we conclude that, for every $k\geq 1$, 
{\small
\begin{equation} \label{ineq:Th_ergodic120932}
 \max\left\{\|r_{k,\,x}^a\|_{\X, R_k}^*,\, \|r_{k,\,y}^a\|_{\Y, (B^*H_kB^*+S_k)}^*,\,\|r^a_{k,\,\gamma}\|_{\Gamma,\theta^{-1}H^{-1}_k}^*\right\}\leq\|   (r_{k,\,x}^a,r_{k,\,y}^a, r_{k,\,\gamma}^a)   \|_{\Z, M_k}^* \leq \frac{\sqrt{1+\tau_{\theta}}\mathcal{E}\,d_0 }{k},
\end{equation}}
\begin{equation}\label{ine:eps124523}
\varepsilon_k^a=\dfrac{1}{k}\left(\sum_{i=1}^k\,\inner{r_{i,x}}{x_i-x_k^a}_{\X}+
\sum_{i=1}^k\,\inner{r_{i,y}}{y_i-y_k^a}_{\Y}+\sum_{i=1}^k\,\inner{r_{i,\gamma}}{\tilde \gamma_i-\tilde \gamma_k^a}_{\Gamma}\right)\leq\frac{(1+\tau_{\theta})\widehat{\mathcal{E}}d_0^2}{k}.
\end{equation}
On the other hand, \eqref{eq:main.pc.admm}, \eqref{def:erg} and \eqref{def:erg2} yield
\begin{align*}
 Ax_k+By_k=r_{k,\gamma}+b,\quad  Ax^a_k+By^a_k=r^a_{k,\gamma}+b. 
 \end{align*}
Additionally,   \eqref{def:erg}, \eqref{def:erg2} and some algebraic manipulations give
\begin{align*}
 \sum_{i=1}^k\inner{\tilde \gamma_i}{r_{i,\gamma}-r_{k,\gamma}^a}_\Gamma=
 \sum_{i=1}^k\inner{\tilde \gamma_i-\tilde \gamma_k^a}{r_{i,\gamma}-r_{k,\gamma}^a}_\Gamma=
 \sum_{i=1}^k\inner{\tilde \gamma_i-\tilde \gamma_k^a}{r_{i,\gamma}}_\Gamma.
 \end{align*}
Hence, combining  the identity in \eqref{ine:eps124523} with the last two displayed equations, we also find
\begin{align}
\nonumber
\varepsilon_k^a= &\dfrac{1}{k}\sum_{i=1}^k\,\Big(\inner{r_{i,x}}{x_i-x_k^a}_{\X}+
\inner{r_{i,y}}{y_i-y_k^a}_{\Y}\Big)+\dfrac{1}{k}\sum_{i=1}^k\inner{\tilde \gamma_i}{r_{i,\gamma}-r^a_{k,\gamma}}_{\Gamma}\\\nonumber
 &=\dfrac{1}{k}\sum_{i=1}^k\,\Big(\inner{r_{i,x}}{x_i-x_k^a}_{\X}+
\inner{r_{i,y}}{y_i-y_k^a}_{\Y}+\inner{\tilde \gamma_i}{Ax_i-Ax_k^a+By_i-By_k^a}_{\Gamma}\Big)\\\nonumber
&=\frac{1}{{k}}\sum_{i=1}^k \inner{r_{i,x}+A^*\tilde \gamma_i}{x_i-x_k^a}_{\X}+
\frac{1}{{k}}\sum_{i=1}^k \inner{r_{i,y}+B^*\tilde \gamma_i}{y_i-y_k^a}_{\Y} =\varepsilon_{k,x}^a+\varepsilon_{k,y}^a,
\end{align}
where the last equality is due to the definitions of $\varepsilon_{k,x}^a$ and $\varepsilon_{k,y}^a$  in \eqref{def:erg3b}.
Therefore, the inequalities in \eqref{ineq:Th_ergodic123} and \eqref{ine:eps1245} now follows from  \eqref{ineq:Th_ergodic120932}
and \eqref{ine:eps124523}, respectively. 

To finish the proof 
of the theorem, note that direct use of Theorem \ref{th:tf}(b) (for $f$ and $g$), \eqref{eq:main.pc.admm} and \eqref{def:erg}--\eqref{def:erg3b}
give $\varepsilon_{k,x}^a,\,\varepsilon_{k,y}^a\geq 0$ and \eqref{eq:main.ec.admm}.
\qed

\appendix

\section{Proof of Theorems \ref{th:alpha} and \ref{th:vhpe.eg}}
\label{sec:app}

We start by presenting the following two Lemmas.

\begin{lemma}
 \label{lema_desigualdadesB}
 For any $z^*,z,z_+,\tilde z\in \Z$ and $M\in \M^{\Z}_{+}$, we have 
 \begin{align*}
 \|z^*-z\|_{\Z,\,M}^2 - \|z^*-z_+\|_{\Z,\,M}^2 = \|z-\tilde{z}\|_{\Z,M}^2-\|z_+-\tilde{z}\|_{\Z,M}^2
   +2\langle \tilde{z}-z^*, M(z-z_+) \rangle_{\Z}.
\end{align*}
\end{lemma}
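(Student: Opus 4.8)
The statement to prove is Lemma~\ref{lema_desigualdadesB}, a purely algebraic identity about the seminorm $\|\cdot\|_{\Z,M}$ induced by $M\in\M^\Z_+$. The plan is to expand everything using bilinearity of the symmetric form $\langle M(\cdot),\cdot\rangle_\Z$.

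First I would introduce the shorthand $\langle u,v\rangle_M := \langle Mu, v\rangle_\Z$, which by selfadjointness of $M$ is a symmetric bilinear form, so $\|w\|_{\Z,M}^2 = \langle w,w\rangle_M$ and the usual polarization-type manipulations are valid even though $M$ is only positive semidefinite (nothing here needs definiteness). Then the left-hand side expands as
\begin{align*}
\|z^*-z\|_{\Z,M}^2 - \|z^*-z_+\|_{\Z,M}^2
&= \langle z^*-z, z^*-z\rangle_M - \langle z^*-z_+, z^*-z_+\rangle_M\\
&= -2\langle z^*, z-z_+\rangle_M + \langle z,z\rangle_M - \langle z_+,z_+\rangle_M,
\end{align*}
where the $\langle z^*,z^*\rangle_M$ terms cancel. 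For the right-hand side, expand the two squared seminorms,
\begin{align*}
\|z-\tilde z\|_{\Z,M}^2 - \|z_+-\tilde z\|_{\Z,M}^2 = \langle z,z\rangle_M - \langle z_+,z_+\rangle_M - 2\langle \tilde z, z-z_+\rangle_M,
\end{align*}
and add $2\langle \tilde z - z^*, M(z-z_+)\rangle_\Z = 2\langle \tilde z, z-z_+\rangle_M - 2\langle z^*, z-z_+\rangle_M$. The $\pm 2\langle\tilde z, z-z_+\rangle_M$ terms cancel, leaving exactly $\langle z,z\rangle_M - \langle z_+,z_+\rangle_M - 2\langle z^*, z-z_+\rangle_M$, which matches the left-hand side.

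There is no real obstacle here; it is a routine verification. The only thing to be a little careful about is that $M$ is merely positive semidefinite, so one should phrase the argument via the bilinear form $\langle M(\cdot),\cdot\rangle_\Z$ rather than via any inner-product structure or norm identities that would implicitly require definiteness — but every step above uses only bilinearity and symmetry, so this causes no trouble. I would present the proof as the two displayed expansions above followed by the observation that they coincide.
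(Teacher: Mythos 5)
Your proof is correct and is essentially the same as the paper's: both are routine algebraic verifications using only the symmetry and bilinearity of $\inner{M(\cdot)}{\cdot}_{\Z}$ (the paper manipulates the left-hand side via two successive completion-of-square steps, while you expand both sides and match them, which is an immaterial difference). Your remark that no step requires positive definiteness of $M$ is accurate and consistent with the paper's use of the lemma for degenerate metrics.
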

\begin{proof}
 Direct calculations yield
\begin{align*}
  \|z^*-z\|_{\Z,M}^2-\|z^*-z_+\|_{\Z,M}^2&=2\langle z_+-z^*, M(z-z_+)\rangle_{\Z}+ \|z_+-z\|_{\Z,\,M}^2\\
  & = 2\langle z_+-\tilde z, M(z-z_+)\rangle_{\Z}+2\langle \tilde z-z^*, M(z-z_+)\rangle_{\Z}\\
&+ \|z_+-z\|_{\Z,M}^2 \\
& = 2\langle \tilde{z}-z^*, M(z-z_+)\rangle_{\Z}+ \|\tilde{z}-z\|_{\Z,M}^2- \|\tilde z-z_{+}\|_{\Z,M}^2\,. 
\end{align*}
\end{proof}

\begin{lemma}
\label{lema_desigualdades}
Let $\{z_k\}$, $\{M_k\}$, $\{\tilde z_k\}$ and $\{\eta_k\}$ be generated by the \emph{VM-HPE framework}. For every $k \geq 1$
and $z^* \in T^{-1}(0):$
\begin{itemize}
%
%
 \item[\emph{(a)}] we have
\[  
  \|z^*-z_{k}\|_{\Z,M_k}^2\leq 
  \|z^*-z_{k-1}\|_{\Z,M_k}^2 +\eta_{k-1}-\eta_{k}-(1 - \sigma) \|z_{k-1}-\tilde{z}_k\|_{\Z,M_k}^2;
\]
\item[\emph{(b)}]  we have
\[ 
  \|z^*-z_{k}\|_{\Z,M_k}^2+\eta_k+(1-\sigma) \displaystyle \sum_{i=1}^k\|z_{i-1}-\tilde{z}_i\|_{\Z,M_i}^2 \leq C_P (\|z^*-z_{0}\|_{\Z,M_0}^2  +   \eta_{0})\,,
\]
where $C_P$ and $M_0$ are as in \eqref{pro:Q12}  and \emph{Assumption~\ref{ass:a1}}, respectively.
\end{itemize}
\end{lemma}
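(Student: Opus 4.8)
\textbf{Proof proposal for Lemma~\ref{lema_desigualdades}.}
The plan is to prove part (a) first by a direct application of Lemma~\ref{lema_desigualdadesB} combined with monotonicity of $T$ and the relative-error condition \eqref{breg-cond1}, and then to derive part (b) from part (a) by telescoping together with the metric-compatibility estimates in \eqref{pro:Q12}.

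For part (a), fix $k\geq 1$ and $z^*\in T^{-1}(0)$. First I would apply Lemma~\ref{lema_desigualdadesB} with $M=M_k$, $z=z_{k-1}$, $z_+=z_k$ and $\tilde z=\tilde z_k$, which gives
\begin{align*}
 \|z^*-z_{k-1}\|_{\Z,M_k}^2-\|z^*-z_k\|_{\Z,M_k}^2
 = \|z_{k-1}-\tilde z_k\|_{\Z,M_k}^2-\|z_k-\tilde z_k\|_{\Z,M_k}^2
 +2\langle \tilde z_k-z^*, M_k(z_{k-1}-z_k)\rangle_{\Z}.
\end{align*}
Since $r_k=M_k(z_{k-1}-z_k)\in T(\tilde z_k)$ by \eqref{breg-subpro} and $0\in T(z^*)$, monotonicity of $T$ yields $\langle \tilde z_k-z^*, M_k(z_{k-1}-z_k)\rangle_{\Z}\geq 0$, so the last term can be dropped. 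Then I would use the HPE error bound \eqref{breg-cond1}, namely $\|z_k-\tilde z_k\|_{\Z,M_k}^2+\eta_k\leq \sigma\|z_{k-1}-\tilde z_k\|_{\Z,M_k}^2+\eta_{k-1}$, i.e. $-\|z_k-\tilde z_k\|_{\Z,M_k}^2\leq -\,\|z_{k-1}-\tilde z_k\|_{\Z,M_k}^2 + (\sigma-1)\|z_{k-1}-\tilde z_k\|_{\Z,M_k}^2 + \eta_{k-1}-\eta_k$; wait, more carefully: from \eqref{breg-cond1}, $-\|z_k-\tilde z_k\|_{\Z,M_k}^2 \ge$ nothing directly, but $\|z_k-\tilde z_k\|_{\Z,M_k}^2 \le \sigma\|z_{k-1}-\tilde z_k\|_{\Z,M_k}^2+\eta_{k-1}-\eta_k$. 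Substituting this upper bound for $-(-\|z_k-\tilde z_k\|_{\Z,M_k}^2)$ after moving it to the right side gives exactly
\[
 \|z^*-z_{k-1}\|_{\Z,M_k}^2-\|z^*-z_k\|_{\Z,M_k}^2 \ge \|z_{k-1}-\tilde z_k\|_{\Z,M_k}^2 - \sigma\|z_{k-1}-\tilde z_k\|_{\Z,M_k}^2 - \eta_{k-1}+\eta_k,
\]
which rearranges to the claimed inequality in (a).

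For part (b), the idea is to iterate (a). The subtlety — which I expect to be the main obstacle — is that in (a) the term $\|z^*-z_k\|_{\Z,M_k}^2$ is measured in the metric $M_k$, whereas the next application of (a) at step $k+1$ produces $\|z^*-z_k\|_{\Z,M_{k+1}}^2$; similarly the accumulated sum terms $\|z_{i-1}-\tilde z_i\|_{\Z,M_i}^2$ each carry their own metric. To bridge these I would use Assumption~\ref{ass:a1}/\eqref{pro:Q12}: from $M_k\preceq (1+c_k)M_{k-1}$ and Proposition~\ref{pr:dns} one gets $\|\cdot\|_{\Z,M_k}^2\le (1+c_k)\|\cdot\|_{\Z,M_{k-1}}^2$, and more generally $M_j\preceq C_P M_k$ gives $\|\cdot\|_{\Z,M_j}^2\le C_P\|\cdot\|_{\Z,M_k}^2$. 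A clean route is to telescope (a) after first summing it from $i=1$ to $k$, but carefully tracking the metric mismatch: summing (a) over $i=1,\dots,k$ and using $\|z^*-z_{i-1}\|_{\Z,M_i}^2\le (1+c_{i-1})\|z^*-z_{i-1}\|_{\Z,M_{i-1}}^2$ to align consecutive terms, the left-hand side collapses (up to the $(1+c_{i-1})$ factors) and one is left with a bound of the form $\|z^*-z_k\|_{\Z,M_k}^2+\eta_k+(1-\sigma)\sum_{i=1}^k\|z_{i-1}-\tilde z_i\|_{\Z,M_i}^2 \le \big(\prod_{i=0}^{k-1}(1+c_i)\big)(\|z^*-z_0\|_{\Z,M_0}^2+\eta_0)$; the product is at most $C_P$ by \eqref{pro:Q12}. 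The one genuinely delicate point is making sure the $(1+c_{i-1})$ factors multiply correctly through the telescoping so that only the cumulative product $\prod(1+c_i)\le C_P$ survives rather than a larger power; I would handle this by a short induction on $k$ proving $\|z^*-z_k\|_{\Z,M_k}^2+\eta_k+(1-\sigma)\sum_{i=1}^k\|z_{i-1}-\tilde z_i\|_{\Z,M_i}^2 \le \big(\prod_{i=0}^{k-1}(1+c_i)\big)(\|z^*-z_0\|_{\Z,M_0}^2+\eta_0)$, where the inductive step uses (a) at index $k$ together with $\|z^*-z_{k-1}\|_{\Z,M_k}^2\le(1+c_{k-1})\|z^*-z_{k-1}\|_{\Z,M_{k-1}}^2$ and the observation that $(1+c_{k-1})$ also multiplies the already-accumulated sum term harmlessly since $(1-\sigma)\ge 0$, $\eta_{k-1}\ge 0$. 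Then $\prod_{i=0}^{k-1}(1+c_i)\le C_P$ finishes the proof.
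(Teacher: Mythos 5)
Your proposal is correct and follows essentially the same route as the paper: part (a) via Lemma~\ref{lema_desigualdadesB} with $(z,z_+,\tilde z)=(z_{k-1},z_k,\tilde z_k)$, monotonicity of $T$ applied to $r_k\in T(\tilde z_k)$ and $0\in T(z^*)$, and the error condition \eqref{breg-cond1}; part (b) by recursing (a) with the metric-alignment bound $\|z^*-z_{k-1}\|_{\Z,M_k}^2\le(1+c_{k-1})\|z^*-z_{k-1}\|_{\Z,M_{k-1}}^2$ and $\prod_i(1+c_i)\le C_P$. Your inductive bookkeeping of the $(1+c_{i-1})$ factors (dropping them on the nonnegative accumulated sum) is exactly the right way to make the paper's terse ``apply recursively'' rigorous.
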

\proof 
(a) From Lemma \ref{lema_desigualdadesB} with 
$(z,z_+,\tilde z)=(z_{k-1},z_k,\tilde z_k)$ and $M=M_k$,  \eqref{breg-subpro} and \eqref{breg-cond1}, we obtain
 \[
  \|z^*-z_{k-1}\|_{\Z,M_k}^2 - \|z^*-z_{k}\|_{\Z,M_k}^2 +\eta_{k-1}\ge (1-\sigma) \|z_{k-1}-\tilde{z}_k\|_{\Z,M_k}^2+\eta_k + 
	 2\langle \tilde{z}_k-z^*, r_k\rangle.
 \]
Hence, (a) follows from the above inequality, the fact that $0 \in T(z^*)$ and $r_k \in T(\tz_k)$ (see \eqref{breg-subpro}), and 
the monotonicity of $T$.

(b)  Using (a), \eqref{eq:dnorm} and Assumption \ref{ass:a1},  we find 
\[ 
 \|z^*-z_{k}\|_{\Z,M_k}^2 \leq 
 (1+c_{k-1})\|z^*-z_{k-1}\|_{\Z,\,M_{k-1}}^2+\eta_{k-1}-\eta_{k}-(1 - \sigma) \|z_{k-1}-\tilde{z}_k\|_{\Z,\,M_k}^2.
\]
Thus, the result follows by applying the above inequality recursively and by using  \eqref{pro:Q12}.
\endproof

We are now ready to prove Theorem \ref{th:alpha}.
\\[2mm]
\noindent
{\bf Proof of Theorem \ref{th:alpha}}: First, note that the desired inclusion holds due to \eqref{breg-subpro}. Now, using \eqref{eq:500} and \eqref{breg-cond1}, we obtain, respectively,
\begin{align*}
 \begin{aligned}
 &\|{z}_{k-1}-z_k\|_{\Z,M_k}^2\leq 2\left( 
 \|{z_{k-1}}- {\tz}_k\|_{\Z,M_k}^2+\|{\tz}_k-z_k\|_{\Z,M_k}^2\right),\\
 &\|{\tz}_k-z_k\|_{\Z,M_k}^2 \leq\sigma\|{z_{k-1}}-\tz_{k}\|_{\Z,M_k}^2+\eta_{k-1}-\eta_k.
\end{aligned}
\end{align*}
Combining the above inequalities, 
we find
\[
 \|{z_{k-1}}- {z}_k\|_{\Z,M_k}^2\leq2\left[(1+\sigma) \|{z_{k-1}}- {\tz}_k\|_{\Z,M_k}^2+\eta_{k-1}-\eta_k\right],
\]
which in turn, combined with Lemma~\ref{lema_desigualdades}(b), yields
\begin{align}
 \label{eq:102}
 \sum_{i=1}^k \|{z_{i-1}}- {z}_i\|_{\Z,M_i}^2 \leq \frac{2(1+\sigma)C_P (\|z^*-z_{0}\|_{\Z,M_0}^2  +\eta_{0})+  
2(1-\sigma)\eta_{0}}{ (1 - \sigma)},
\end{align}
for all $z^*\in T^{-1}(0)$.
Hence, \eqref{v_ieps_i-bound-a} follows from Proposition \ref{pr:dn}, \eqref{breg-subpro}, \eqref{def:d0},  \eqref{eq:102} and the fact that  
$\sum_{i=1}^k\,t_i\geq k\min_{i=1,\dots, k} \{t_i\}$.  
\qed

\vgap
Before proceeding to the proof of the ergodic convergence of the VM-HPE framework, let us first present an auxiliary result.
\begin{proposition} 
\label{a2909}
 Let $\{z_k\}$, $\{M_k\}$ and $\{\eta_k\}$ be generated by the \emph{VM-HPE framework} and consider $\{\tilde z_k^a\}$ and $\{\varepsilon_k^a\}$ as in \eqref{SeqErg}. Then, for every $k\geq 1$,
\begin{equation}
 \label{ad56} 
  \varepsilon_k^a \leq \frac{1}{2k}\left(\eta_{0}+\|\tilde z^a_{k}-z_{0}\|_{\Z,M_{0}}^2+
  \sum_{i=1}^{k}c_{i-1}\|\tilde z^a_{k}-z_{i-1}\|_{\Z,M_{i-1}}^2\right),
\end{equation}
where $M_0$ and $\{c_k\}$ are given in \emph{Assumption~\ref{ass:b1}}.
\end{proposition}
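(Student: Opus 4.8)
The plan is to estimate $\varepsilon_k^a$ directly from its definition in \eqref{SeqErg} by expanding the inner products $\inner{r_i}{\tilde z_i - \tilde z_k^a}$, using the key identity $r_i = M_i(z_{i-1} - z_i)$ from \eqref{breg-subpro}. First I would write
\[
k\,\varepsilon_k^a = \sum_{i=1}^k \inner{M_i(z_{i-1}-z_i)}{\tilde z_i - \tilde z_k^a}_{\Z},
\]
and split $\tilde z_i - \tilde z_k^a = (\tilde z_i - z_{i-1}) + (z_{i-1} - \tilde z_k^a)$ (or, more symmetrically, relate everything to $z_{i-1}$ and the fixed vector $\tilde z_k^a$). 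The first piece, $\inner{M_i(z_{i-1}-z_i)}{\tilde z_i - z_{i-1}}_{\Z}$, should be controlled by the HPE error condition \eqref{breg-cond1}: since $M_i(z_{i-1}-z_i) = r_i$ and the residual bound says $\|z_i - \tilde z_i\|_{\Z,M_i}^2 + \eta_i \le \sigma \|z_{i-1} - \tilde z_i\|_{\Z,M_i}^2 + \eta_{i-1}$, a completion-of-squares / Cauchy–Schwarz argument in the $M_i$-seminorm gives $\inner{r_i}{\tilde z_i - z_{i-1}}_{\Z} \le \tfrac12(\eta_{i-1} - \eta_i)$ or something of that flavor, yielding a telescoping sum bounded by $\tfrac12\eta_0$. (Concretely, $\inner{M_i(z_{i-1}-z_i)}{\tilde z_i - z_{i-1}} = \tfrac12\|z_{i-1}-\tilde z_i\|_{M_i}^2 - \tfrac12\|z_i - \tilde z_i\|_{M_i}^2 - \tfrac12\|z_{i-1}-z_i\|_{M_i}^2 \le \tfrac12(\eta_{i-1}-\eta_i)$ when $\sigma<1$, after dropping the negative terms.)

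The second, and harder, piece is the telescoping-type sum $\sum_{i=1}^k \inner{M_i(z_{i-1}-z_i)}{z_{i-1} - w}_{\Z}$ where $w := \tilde z_k^a$ is fixed. Here I would use Lemma~\ref{lema_desigualdadesB} with $(z^*,z,z_+,\tilde z) = (w, z_{i-1}, z_i, w)$ — taking $\tilde z = z^* = w$ — which collapses to
\[
\|w - z_{i-1}\|_{\Z,M_i}^2 - \|w - z_i\|_{\Z,M_i}^2 = 2\inner{w - w}{M_i(z_{i-1}-z_i)} + \|z_{i-1}-w\|_{\Z,M_i}^2 - \|z_i - w\|_{\Z,M_i}^2,
\]
so more usefully I'd just use the elementary identity $2\inner{M_i(z_{i-1}-z_i)}{z_{i-1}-w}_{\Z} = \|z_{i-1}-w\|_{\Z,M_i}^2 - \|z_i-w\|_{\Z,M_i}^2 + \|z_{i-1}-z_i\|_{\Z,M_i}^2$, then drop the last (nonnegative) term. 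This leaves $\sum_i \big(\|z_{i-1}-w\|_{\Z,M_i}^2 - \|z_i - w\|_{\Z,M_i}^2\big)$, which does NOT telescope cleanly because the operator changes from $M_i$ to $M_{i+1}$ between consecutive terms. This mismatch is the main obstacle.

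To handle it, I would compare $\|z_i - w\|_{\Z,M_i}^2$ with $\|z_i - w\|_{\Z,M_{i+1}}^2$ using Assumption~\ref{ass:a1}: since $M_{i+1} \preceq (1+c_i) M_i$, we get $\|z_i - w\|_{\Z,M_{i+1}}^2 \le (1+c_i)\|z_i-w\|_{\Z,M_i}^2 = \|z_i-w\|_{\Z,M_i}^2 + c_i\|z_i-w\|_{\Z,M_i}^2$. Re-indexing and inserting this into the failed telescoping sum produces exactly the correction terms $\sum_{i=1}^k c_{i-1}\|z_{i-1}-w\|_{\Z,M_{i-1}}^2$ that appear on the right-hand side of \eqref{ad56} (with $w = \tilde z_k^a$), while the genuinely telescoping part leaves only the boundary term $\|z_0 - w\|_{\Z,M_0}^2 = \|\tilde z_k^a - z_0\|_{\Z,M_0}^2$. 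Collecting the $\tfrac12\eta_0$ from the first piece and the $\tfrac12$ factor from the identity then yields the claimed bound \eqref{ad56}; care must be taken with the direction of the operator inequality ($M_{i+1} \preceq (1+c_i)M_i$ versus its reverse) and with which index the $c$'s carry, but this is bookkeeping. I'd finish by noting $\varepsilon_k^a \ge 0$ already follows from Theorem~\ref{th:tf}(a), so only the upper bound needs proof.
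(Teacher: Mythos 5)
Your overall route is the paper's own: the two polarization identities you write for $\inner{r_i}{\tilde z_i-z_{i-1}}$ and $\inner{r_i}{z_{i-1}-\tilde z_k^a}$ add up exactly to Lemma~\ref{lema_desigualdadesB} applied with $(z^*,z,z_+,\tilde z)=(\tilde z_k^a,z_{i-1},z_i,\tilde z_i)$ and $M=M_i$, and your telescoping-with-metric-correction step (using $M_i\preceq(1+c_{i-1})M_{i-1}$ to generate the terms $c_{i-1}\|\tilde z_k^a-z_{i-1}\|_{\Z,M_{i-1}}^2$) is precisely the paper's second display. However, two sign slips break the argument as written. First, the ``concrete'' identity for the first piece should read
\[
\inner{M_i(z_{i-1}-z_i)}{\tilde z_i-z_{i-1}}_{\Z}
=\tfrac12\|z_i-\tilde z_i\|_{\Z,M_i}^2-\tfrac12\|z_{i-1}-\tilde z_i\|_{\Z,M_i}^2-\tfrac12\|z_{i-1}-z_i\|_{\Z,M_i}^2,
\]
with the first two squared norms in the opposite order from what you wrote; only in this form does \eqref{breg-cond1} deliver the upper bound $\tfrac12(\eta_{i-1}-\eta_i)$ (your version would need an upper bound on $\|z_{i-1}-\tilde z_i\|_{\Z,M_i}^2-\|z_i-\tilde z_i\|_{\Z,M_i}^2$, whereas \eqref{breg-cond1} only gives a lower bound on that difference).

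Second, and more substantively: in the second piece the term $+\|z_{i-1}-z_i\|_{\Z,M_i}^2$ is nonnegative and enters with a plus sign, so it cannot be ``dropped'' when you are after an \emph{upper} bound on $\varepsilon_k^a$ --- discarding it decreases the right-hand side. Bounding the two pieces separately as proposed therefore leaves an uncontrolled extra term $\tfrac{1}{2k}\sum_{i=1}^k\|z_{i-1}-z_i\|_{\Z,M_i}^2$ that is absent from \eqref{ad56}. The repair is to add the two identities \emph{before} estimating: the $+\tfrac12\|z_{i-1}-z_i\|_{\Z,M_i}^2$ from the second piece cancels exactly against the $-\tfrac12\|z_{i-1}-z_i\|_{\Z,M_i}^2$ from the (corrected) first piece --- which is what Lemma~\ref{lema_desigualdadesB} accomplishes in one stroke. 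After that cancellation, applying \eqref{breg-cond1}, discarding $(1-\sigma)\|z_{i-1}-\tilde z_i\|_{\Z,M_i}^2\ge 0$, and running your metric-corrected telescoping gives \eqref{ad56} exactly. So the idea is the right one and coincides with the paper's; the execution needs this cancellation made explicit rather than dismissed as bookkeeping.
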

\begin{proof}
Using  Lemma~\ref{lema_desigualdadesB} with $(z^*,z,z_+,\tilde z)=(\tilde z^a_{k},z_{i-1},z_i,\tilde z_i)$ and $M=M_i$, \eqref{breg-subpro} and (\ref{breg-cond1}), we find, for every $i=1,\dots,k$,
\begin{align*}
  \|\tilde z^a_{k}-z_{i-1}\|_{\Z,M_i}^2-\|\tilde z^a_{k}-z_{i}\|_{\Z,M_i}^2 +\eta_{i-1}&\ge (1-\sigma)\|\tilde{z}_i-z_{i-1}\|_{\Z,M_i}^2+  
	\eta_i+2\langle r_i, \tilde{z}_i - \tilde z^a_{k}\rangle\\
                                          &\ge\eta_i+2\langle r_i,\tilde{z}_i-\tilde z^a_{k}\rangle, 
\end{align*}
where the second inequality is due to the fact that  $1-\sigma\geq 0$.
Hence, using Assumption \ref{ass:a1} and simple calculations, we obtain 
\[ 
 \|\tilde z^a_{k}-z_{i}\|_{\Z,M_i}^2 \leq (1+c_{i-1})\|\tilde z^a_{k}-z_{i-1}\|_{\Z,M_{i-1}}^2+\eta_{i-1}-\eta_{i}-
 2\langle r_i, \tilde{z}_i -\tilde z^a_{k}\rangle   \quad \forall i=1,\dots,k.
\]
Summing up the last inequality from $i=1$ to $i=k$   and using the definition of $\varepsilon_k^a$  in \eqref{SeqErg}, we have
\[ 
 0\leq\|\tilde z^a_{k}-z_k\|_{\Z,M_k}^2 \leq \sum_{i=1}^{k}c_{i-1}\|\tilde z^a_{k}-z_{i-1}\|_{\Z,M_{i-1}}^2+\|\tilde z^a_{k}-z_{0}\|_{\Z,M_{0}}^2
+\eta_{0}-2 k\,\varepsilon_k^a,
\]
which clearly gives \eqref{ad56}. 
\end{proof}

\noindent
{\bf Proof of Theorem \ref{th:vhpe.eg}}: Note first that the desired inclusion and the first inequality in
\eqref{th:vhpe.eg03} follow from \eqref{breg-subpro}, \eqref{SeqErg} and Theorem \ref{th:tf}(a).
Take $z^*\in T^{-1}(0)$. Now, let us prove the second inequality in \eqref{th:vhpe.eg03}, which will  follow
by bounding the term in the right-hand side of \eqref{ad56}.
Note that, using the convexity of $\|\cdot\|_{M_{i-1}}^2$, inequality  \eqref{eq:500} and~\eqref{SeqErg},
 we find
 \begin{equation} \label{eq:801}
 \|\tilde z_k^a-z_{i-1}\|_{\Z,M_{i-1}}^2\leq \frac1k\sum_{j=1}^{k} \|\tilde z_j-z_{i-1}\|_{\Z,M_{i-1}}^2
\leq\frac2k \sum_{j=1}^{k}\left( \|\tilde z_j-z_j\|_{\Z,M_{i-1}}^2+ \| z_j-z_{i-1}\|_{\Z,M_{i-1}}^2\right).
\end{equation}
From \eqref{pro:Q12}, we have $M_{i-1}\preceq C_PM_j$ for all $j=1,\dots, k$. Hence, using Proposition~\ref{pr:dns},
inequality \eqref{breg-cond1}, Lemma \ref{lema_desigualdades}(b) and \eqref{def:d0}, we find
\begin{align}
  \label{eq:802}
 \nonumber
  \sum_{j=1}^k\,\|\tilde z_j-z_j\|_{\Z,M_{i-1}}^2&\leq C_P\sum_{j=1}^k\,\|\tilde z_j-z_j\|_{\Z,M_{j}}^2  \\
	\nonumber
	&\leq {C_P}\sum_{j=1}^k\,\left(\sigma\|\tilde z_j-z_{j-1}\|^2_{\Z,M_j}+\eta_{j-1}-\eta_j\right)\\
	&\leq \dfrac{\sigma}{1-\sigma}C_p^2(d_0^2+\eta_0)+C_P\eta_0.
\end{align} 
%
On the other hand, using \eqref{eq:500},
$M_{i-1}\preceq C_P M_j$ for all $j=1,\dots, k$, Proposition~\ref{pr:dns}, Lemma \ref{lema_desigualdades}(b)
and \eqref{def:d0}, we obtain
\begin{align}
 \label{eq:803}
 \nonumber
 \sum_{j=1}^k\,\|z_j-z_{i-1}\|^2_{\Z,M_{i-1}}&\leq 2 \sum_{j=1}^k\,\left(\|z_j-z^*\|_{\Z,M_{i-1}}^2+\|z^*-z_{i-1}\|_{\Z,M_{i-1}}^2\right) \\
 \nonumber
&\leq 2 \sum_{j=1}^k\,\left(C_P\|z_j-z^*\|_{\Z,M_{j}}^2+ \|z^*-z_{i-1}\|_{\Z,M_{i-1}}^2\right) \\
 &\leq 2(1+C_P)C_P(d_0^2+\eta_0)k.
\end{align}
It follows from  inequalities  \eqref{eq:801}--\eqref{eq:803} and the fact that $k\geq1$  that
 \[
 \|\tilde z_k^a-z_{i-1}\|_{\Z,M_{i-1}}^2\leq  \left(\frac{\sigma C_P}{1-\sigma}+2(1+C_P)\right)2C_P(d_0^2+\eta_0)+2C_P\eta_0,
\]
which, combined with Proposition~\ref{a2909} and the first condition in \eqref{CS}, yields

\[
\varepsilon^a_{k} \leq\frac1{2k}\left[  2C_P(1+C_S)\left(\frac{\sigma C_P}{1-\sigma}+2(1+C_P)\right)(d_0^2+\eta_0)+\left(1+2(1+C_S)C_P\right)\eta_0\right].
\]
Therefore, the second inequality in  \eqref{th:vhpe.eg03} now follows from definition of $\widehat{\mathcal{E}}$ and  simple calculus.

To finish the proof of the theorem, it remains to prove \eqref{th:vhpe.eg02}. Assume first that $k\geq 2$. Using \eqref{SeqErg} and simple calculus, we have
\begin{equation}
 \label{eq:krk}
 k\,r^a_{k} =\sum_{i=1}^k\,r_i=M_1(z_0-z^*)-M_k(z_k-z^*)+\sum_{i=1}^{k-1} (M_{i+1}-M_i)(z_i-z^*).
\end{equation}
From \eqref{pro:Q12}, we obtain $M_1\preceq C_P M_k$ and $M_1\preceq C_P M_0$. Hence, it follows
from Propositions \ref{pr:dn} and \ref{pr:dns} that
\begin{align}
 \label{eq:300}
 \nonumber
 \|M_1(z_0-z^*)\|^*_{\Z, M_k}&\leq \sqrt{C_P}\|M_1(z_0-z^*)\|^*_{\Z, M_1}\\ 
\nonumber
    &=\sqrt{C_P}\|z_0-z^*\|_{\Z, M_1}\\
		&\leq C_p\|z_0-z^*\|_{\Z,M_0}.
\end{align}
Direct use of Proposition \ref{pr:dn} yields
\begin{align}
 \label{eq:301}
  \|M_k(z_k-z^*)\|_{\Z, M_k}^*=\|z_k-z^*\|_{\Z, M_k}.
\end{align}
Next step is to estimate the general term in the summation in \eqref{eq:krk}. To do this, first note
that using Assumption \ref{ass:a1}, we find  
\begin{equation}
 \label{eq:Li}
  0\preceq  L_i:=M_{i+1}-M_i + c_iM_{i+1}\preceq c_i (2+c_i)M_i\,,\qquad \forall\, i=1,\dots, k-1,
\end{equation}
and so
\begin{align}
 \label{eq:304}
 \nonumber
  \|(M_{i+1}-M_i)(z_i-z^*)\|^*_{\Z, M_k}&=\|(L_i-c_iM_{i+1})(z_i-z^*)\|^*_{\Z, M_k}\\
	&\leq \|L_i(z_i-z^*)\|^*_{\Z, M_k}+c_i\|M_{i+1}(z_i-z^*)\|^*_{\Z, M_k}.
\end{align}
From \eqref{pro:Q12} and the last inequality in \eqref{eq:Li}, we obtain, respectively,
$M_i\preceq C_p M_k$ and $L_i\preceq c_i(2+c_i)M_i$. Hence, using Propositions \ref{pr:dn} and \ref{pr:dns}, 
we have
\begin{align}
 \label{eq:302}
 \nonumber
 \|L_i(z_i-z^*)\|^*_{\Z,M_k}&\leq \sqrt{C_P}\|L_i(z_i-z^*)\|^*_{\Z, M_i}\\
    \nonumber 
		&\leq \sqrt{C_P}\sqrt{c_i(2+c_i)}\|L_i(z_i-z^*)\|^*_{\Z,L_i}\\
		\nonumber 
		&=\sqrt{C_P}\sqrt{c_i(2+c_i)}\|z_i-z^*\|_{\Z,L_i}\\
		&\leq \sqrt{C_P}\,c_i(2+c_i)\|z_i-z^*\|_{\Z, M_i}.
\end{align}
Again, from \eqref{pro:Q12}, we obtain $M_{i+1}\preceq C_P M_k$ and $M_{i+1}\preceq (1+c_i)M_i$, and consequently
 \begin{align}
 \label{eq:303}
 \nonumber
 \|M_{i+1}(z_i-z^*)\|^*_{\Z, M_k}&\leq \sqrt{C_p}\|M_{i+1}(z_i-z^*)\|^*_{\Z, M_{i+1}}\\
  \nonumber
	&=\sqrt{C_p}\|z_i-z^*\|_{\Z, M_{i+1}}\\
	&\leq \sqrt{C_P(1+c_i)}\|z_i-z^*\|_{\Z, M_i}.
\end{align}
Hence, using \eqref{pro:Q12} and \eqref{eq:304}--\eqref{eq:303},
 we find
\begin{align}
 \label{eq:305}
 \nonumber
 \|(M_{i+1}-M_i)(z_i-z^*)\|^*_{\Z,M_k}&\leq c_i\sqrt{C_p}\left(1+(1+c_i)+\sqrt{1+c_i}\,\right)\|z_i-z^*\|_{\Z, M_i}\\
   &\leq c_i\sqrt{C_P}\left(1+C_P+\sqrt{C_P}\right)\|z_i-z^*\|_{\Z, M_i}.
\end{align}
Finally, using  the definition of $d_0$ in  \eqref{def:d0}, \eqref{eq:krk}--\eqref{eq:301}, \eqref{eq:305}  and
Lemma \ref{lema_desigualdades}(b), 
we conclude that
\begin{align*}
 \nonumber
  k\|r^a_k\|_{\Z, M_k}^*&\leq \|M_1(z_0-z^*)\|^*_{\Z, M_k}+\|M_k(z_k-z^*)\|^*_{\Z,M_k}
	+\sum_{i=1}^{k-1}\,\|(M_{i+1}-M_i)(z_i-z^*)\|_{\Z,M_k}^*\\
	\nonumber
	&\leq \left(C_P+1+C_S\sqrt{C_P}\left(1+C_P+\sqrt{C_P}\right)\right)\max_{i=0,\dots, k}\,\|z_i-z^*\|_{\Z,M_i}\\
	\nonumber
	&\leq \sqrt{C_P}\left(C_P+1+C_S\sqrt{C_P}\left(1+C_P+\sqrt{C_P}\right)\right)\sqrt{d_0^2+\eta_0}\\
	&=\left((1+C_P)\left(\sqrt{C_P}+C_SC_P\right)+C_SC_P^{3/2}\right)\sqrt{d_0^2+\eta_0},
\end{align*}
which gives \eqref{th:vhpe.eg02} for the case $k\geq 2$. 
Note now that
by \eqref{pro:Q12}, we have $M_1\preceq C_PM_0$ and so
using Propositions \ref{pr:dn} and  \ref{pr:dns}, Lemma \ref{lema_desigualdades}(b), \eqref{def:d0} and the second identity in \eqref{SeqErg} with $k=1$,
 we find
\begin{align*}
 \|r_1^a\|^*_{\Z,M_1}=\|r_1\|^*_{\Z,M_1}&=\|M_1(z_0-z_1)\|^*_{\Z,M_1}\\
  &=\|z_0-z_1\|_{\Z,M_1}\\
	&\leq \|z_0-z^*\|_{\Z,M_1}+\|z_1-z^*\|_{\Z,M_1}\\
	&\leq \sqrt{C_P}\|z_0-z^*\|_{\Z,M_0}+\|z_1-z^*\|_{\Z,M_1}\\
	&\leq (1+\sqrt{C_P})\sqrt{C_P}\sqrt{d_0^2+\eta_0}\,,
\end{align*} 
which in turn, combined with the fact that $C_P\geq 1$, gives \eqref{th:vhpe.eg02}
for $k=1$.
\qed

\def\cprime{$'$}

\end{document}